\documentclass{article}
\usepackage[utf8]{inputenc}
\usepackage{latexsym, amsmath, amsfonts, amscd, amssymb, verbatim}
\usepackage[unicode]{hyperref}
\usepackage[margin=1in]{geometry}  
\usepackage{graphicx}
\usepackage{subfigure}
\usepackage{color}
\usepackage{cite}


\newtheorem{Theorem}{Theorem}[section]
\newtheorem{Proposition}{Proposition}[section]

\newtheorem{Definition}{Definition}[section]
\newtheorem{Example}{Example}[section]
\normalsize
\newtheorem{Remark}{Remark}[section]

\newcommand {\dla} {\Delta}
\newcommand{\inner}[2]{\left\langle {#1}, {#2} \right\rangle}

\bibliographystyle{abbrv}
\makeatletter

\makeatother

\title{Second order continuous and discrete dynamical systems for solving inverse quasi-variational inequalities }
\author{ 
Pham Viet Hai\thanks{Faculty of Mathematics and Informatics, Hanoi University of Science and Technology, 1 Dai Co Viet, Hanoi, Vietnam. Email: hai.phamviet@hust.edu.vn}
\and
		 Thanh Quoc Trinh\thanks{
				Faculty of Fundamental Sciences, Van Lang University,  Ho Chi Minh City, Vietnam. Email: thanh.tq@vlu.edu.vn}
			\and
		Phan Tu Vuong\thanks{Mathematical Sciences School, University of Southampton, SO17 1BJ, Southampton, UK. Email: t.v.phan@soton.ac.uk}}
\date{}

\begin{document}

\maketitle
\begin{quote}
\noindent {\bf Abstract.} 
In this paper, we investigate the inverse quasi-variational inequality problem in finite-dimensional spaces. First, we introduce a second-order dynamical system whose trajectory converges exponentially to the solution of the inverse quasi-variational inequality, under the assumptions of Lipschitz continuity and strong monotonicity. Next, we discretize the proposed dynamical system to develop an algorithm, and prove that the iterations converge linearly to the unique solution of the inverse quasi-variational inequality. Finally, we present numerical experiments and applications to validate the theoretical results and compare the performance with existing methods.

\medskip
\noindent {\bf Mathematics Subject Classification (2010).}\ 47J20,
49J40, 49M30.

\medskip
\noindent {\bf Key Words.} Inverse quasi-variational inequality; second order dynamical system; linear convergence; traffic assignment problem.  
\end{quote}

\section{Introduction}
The variational inequality problem (VIP) is a valuable mathematical model used to represent phenomena in both theoretical and applied fields, including optimization problems, fixed-point problems, complementarity problems, and Nash equilibrium problems (see \cite{FacchineiPang03, Kinderlehrer1980, Nagurney1996}). A formulation of the problem in the $n-$dimensional Euclidean space $\mathbb{R}^n$ is the following: given a nonempty, closed, convex subset $C$ of $\mathbb{R}^n$ and a continuous mapping $F: \mathbb{R}^n \rightarrow \mathbb{R}^n$, the VIP consists of finding $x^*\in C$ such that
\[\langle F(x^*),x-x^*\rangle \geq 0, \, \forall x\in C,\]
where $\langle \cdot,\cdot\rangle$ is the standard inner product on $\mathbb{R}^n$.

In practice, there are many cases where the mapping $F$ does not appear as an explicit formula (for instance, see \cite{HeX2011}), but as the inverse of some mapping, i.e. $F=f^{-1}$. This case leads to the inverse variational inequality problem (IVIP) of finding $z^* \in \mathbb{R}^n$ subject to the condition
\[f(z^*) \in C \quad \text{and}\quad  \langle z^*, t-f(z^*)\rangle \geq 0,\quad \forall t\in C.\]

Algorithms in projection form have been developed to solve VIPs and IVIPs (see \cite{Censor2011, Korpelevich76, Popov80, Vuong2020, He2010, Vuong2021}). A key approach to designing such algorithms is the discretization of dynamical systems. The connection between optimization algorithms and dynamical systems has been recognized for a long time and is actively explored by research groups (see \cite{Dey2019, Gao2005, Nagurney1996, Nguyen2020, Pappalardo2002}). In the recent survey \cite{Csetnek2020}, Csetnek systematically reviewed recent advances in the study of first- and second-order dynamical systems in relation to monotone inclusions.

For VIPs, algorithms have been generated from both first-order (see \cite{Cavazzuti2002, Ha2018}) and second-order (see \cite{Antipin1989, Antipin1994, Vuong2020}) dynamical systems. Results regarding the convergence rate and conditions for algorithms derived from second-order dynamical systems typically outperform those from first-order systems. For example, under the strongly monotone condition, the authors in \cite{Vuong2021} demonstrated linear convergence for an IVIP algorithm derived from the discretization of the associated first-order dynamical system, while the algorithm based on a second-order system achieves linear convergence even under the weaker pseudo-strongly monotone condition (see \cite{Vuong2020}). Similarly, for IVIPs, the papers \cite{Vuong2021, Zou2016} utilize a first-order dynamical system and derive projected algorithms through its discretization.\\ 

Due to applications in practice, there is an interest in a more general form of IVIP, that is the inverse quasi variational inequality problem {\rm (IQVIP)}:
Let $\psi : \mathbb{R}^n \rightrightarrows \mathbb{R}^n$ be a set-valued mapping with nonempty, convex, closed point values (($\Gamma_1$-condition, for brief) and $V:\mathbb{R}^n \rightarrow \mathbb{R}^n$ be a single-valued mapping. The IQVIP requires finding $x^*\in \mathbb{R}^n$ such that
\begin{equation}\label{IQVI}
	V(x^*) \in \psi(x^*)  \text{ and } \langle x^*,z-V(x^*) \rangle \geq 0, \quad \forall z\in \psi (x^*).
\end{equation}
Although the theoretical results are developed in \cite{Dey2023, Han2017}, the number of numerical algorithms for solving IQVIPs is still limited. In \cite{Dey2023}, Dey and Reich  used  the \textit{moving set} condition of  $\psi$, which reads as:
\[\psi(x)=\Psi+ h(x) ,\]
where $\Psi$ is a nonempty closed convex subset of $\mathbb{R}^n$ and $h: \mathbb{R}^n \rightarrow \mathbb{R}^n$ is Lipschitz continuous. 
Dey and Reich investigated the global asymptotic stability and exponential stability of a first-order dynamical system. Additionally, they proposed a projection-type algorithm derived from the associated first-order dynamical system and established its linear convergence rate.
Recently, \cite{Thanh2024} demonstrated that the moving set condition is unnecessary for achieving linear convergence, provided appropriate parameters are chosen.
Building on this dynamical systems framework, we extend the study to second-order dynamical systems for solving the IQVIP, aiming to develop more efficient algorithms. 
It is well established in optimization theory that, under comparable assumptions, second-order dynamical systems achieve faster convergence rates. Furthermore, the corresponding algorithms derived from their discretization often result in more efficient numerical methods.
Therefore, we would like to address the following key questions:\\

{\bf Q1:} Does the trajectory generated by the second-order dynamical system converge exponentially?

{\bf Q2:} Can discretization of the second-order dynamical system yield algorithms with linear convergence?\\
The purpose of this paper is to provide affirmative answers to both questions.\\

The structure of the rest of this paper is as follows. In Section 2, we recall some basic definitions, properties and demonstrate a condition for the existence and uniqueness of a solution to the IQVIP. 
In Section 3, we recall conditions for the existence and uniqueness of the trajectory which is a strong global solution of the second order dynamical system, and then show its exponential convergence. 
In Section 4, using the discretization of the second order dynamical system,
 we propose a new algorithm, which is a projection algorithm with inertial effects for solving IQVIP. We show that the iterations converge linearly to the unique solution  under two simple conditions of parameters. 
Finally, we illustrate numerical examples in Section 5 and give some comparison and conclusion on the proposed algorithm.\\

\section{Preliminaries}
We begin by recalling some terminologies used in the whole paper.
 \begin{itemize}
 \item The mapping $V$ is \emph{Lipschitz continuous} with constant $L$ on $\mathbb{R}^n$ if  $\|V(y)-V(z)\|\leq L\|y-z\|$, $\forall y, z\in \mathbb{R}^n$.
  \item The mapping $V$ is \emph{monotone} on $\mathbb{R}^n$ if $\langle V(y)-V(z),y-z\rangle \geq 0$, $\forall y,z \in \mathbb{R}^n$.
 \item The mapping $V$ is \emph{$\eta$-strongly monotone} on $\mathbb{R}^n$ if $\langle V(y)-V(z),y-z\rangle \geq \eta \|y-z\|^2$, $\forall y,z\in \mathbb{R}^n$.\newline
 \end{itemize}
 Let $C$ be nonempty closed convex of $\mathbb{R}^n$ and $x\in \mathbb{R}^n$. Then there exists a unique element $y\in C$ such that
 \[\|x-y\|= \inf_{c\in C} \|x-c\|.\]
 Such point $y$ is denoted as $P_C(x)$ and the mapping $P_C$ is called the \emph{metric projection}. The result below collects some properties of the metric projection. 
\begin{Proposition}[{\cite{GR84}}]\label{ProjectionProperties}
For any $y, z\in \mathbb{R}^n$ and $a\in C$ we have
	\begin{description}
	\item[{\rm (a)}]  $\|P_C(y)-P_C(z)\|\leq\|y-z\|$; 
		\item[{\rm (b)}]  $\langle z-P_C(z),a-P_C(z)\rangle\leq0$;
		\item[{\rm (c)}]  $\|P_C(z)-a\|^2\leq\|z-a\|^2-\|z-P_C(z)\|^2$.
	\end{description}
\end{Proposition}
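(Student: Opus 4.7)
The plan is to take the variational characterization (b) as the keystone and deduce (a) and (c) from it by elementary algebra; existence and uniqueness of $P_C(z)$ are already recalled in the paragraph preceding the statement, so the projection is well-defined throughout.

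First I would prove (b). Fix $z \in \mathbb{R}^n$ and $a \in C$. Since $C$ is convex, the segment $y_t := (1-t)P_C(z) + t a$ lies in $C$ for every $t \in [0,1]$, and by definition of $P_C$ one has $\|z - P_C(z)\|^2 \le \|z - y_t\|^2$. Expanding $\|z - y_t\|^2 = \|(z - P_C(z)) - t(a - P_C(z))\|^2$, subtracting $\|z - P_C(z)\|^2$, dividing by $t > 0$, and letting $t \downarrow 0$ gives exactly $\langle z - P_C(z), a - P_C(z)\rangle \le 0$, which is (b). This is essentially the first-order optimality condition for the convex problem $\min_{c\in C}\tfrac12\|z-c\|^2$.

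Next, for (c) I would use the Pythagorean-style decomposition $\|z-a\|^2 = \|(z-P_C(z)) + (P_C(z)-a)\|^2 = \|z-P_C(z)\|^2 + \|P_C(z)-a\|^2 + 2\langle z - P_C(z), P_C(z) - a\rangle$. Since $P_C(z) - a = -(a - P_C(z))$, part (b) says that the cross term is nonnegative, and (c) follows after dropping it and rearranging.

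Finally, for (a) I would apply (b) twice: once at $z$ with the point $P_C(y) \in C$, giving $\langle z - P_C(z), P_C(y) - P_C(z)\rangle \le 0$, and once at $y$ with the point $P_C(z) \in C$, giving $\langle y - P_C(y), P_C(z) - P_C(y)\rangle \le 0$. Adding the two and rearranging produces $\|P_C(y) - P_C(z)\|^2 \le \langle y - z, P_C(y) - P_C(z)\rangle$, and Cauchy--Schwarz then yields (a). There is no real obstacle: the only conceptual step is the convex optimality argument behind (b); parts (a) and (c) are purely algebraic consequences.
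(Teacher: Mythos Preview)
Your argument is correct and is the standard textbook proof of the projection characterization and its consequences. The paper itself does not supply a proof of this proposition at all; it simply states the three properties and cites \cite{GR84}. So there is no ``paper's proof'' to compare against here---your derivation of (b) from convex optimality, followed by the algebraic deductions of (c) and (a), is exactly what a reader consulting the cited reference would find.
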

\begin{Remark}
In the rest of this paper, we assume that the set-valued mapping $\psi:\mathbb{R}^n \rightrightarrows \mathbb{R}^n$ has $(\Gamma_1)$-condition. Then, two following conditions are equivalent:
\begin{itemize}
    \item [\rm{(a)}] $x^*$ is a solution of IQVIP \eqref{IQVI}.
    \item[\rm{(b)}] For any $\mu>0$, $x^*$ is a solution to the projection equation
    \[V(x)=P_{\psi(x)}(V(x)-\mu x).\]
\end{itemize}

Besides, we also assume that $V$ is $L$-Lipschitz continuous and $\eta-$strongly monotone, so we call this $(\Gamma_2)$-condition.

\end{Remark}

The existence and uniqueness of the solution to the IQVIP \eqref{IQVI} will be established through next proposition. The proof is in \cite[Theorem 3.2]{Dey2023}.

\begin{Proposition}[{\cite{Dey2023}}]
Let $\psi: \mathbb{R}^n \rightrightarrows \mathbb{R}^n$ and $V:\mathbb{R}^n \rightarrow \mathbb{R}^n$ be mappings with \rm{($\Gamma_1$)}, \rm{($\Gamma_2$)}-condition, respectively. Assume that there exists $\rho>0$ satisfying
    \begin{equation} \label{Kappa1}
        \|P_{\psi(r)}(y) -P_{\psi(s)}(y)\| \leq \rho \|r-s\|, \quad \forall y,r,s \in \mathbb{R}^n
    \end{equation} 
   and 
   \begin{equation}\label{Kappa2}
       \sqrt{L^2-2\eta \mu+\mu^2}+\rho <\mu,
   \end{equation}
   where $\mu >0$ is a constant. Then the 
 \rm{IQVIP} \eqref{IQVI} has a unique solution.
\end{Proposition}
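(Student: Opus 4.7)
The plan is to recast IQVIP \eqref{IQVI} as a fixed-point equation and then apply Banach's contraction principle. Fix the parameter $\mu>0$ satisfying condition~\eqref{Kappa2} and, motivated by the equivalence recorded in the preceding Remark, define the map
\[
T(x) := x + \frac{1}{\mu}\bigl[P_{\psi(x)}(V(x)-\mu x)-V(x)\bigr], \qquad x\in\mathbb{R}^n.
\]
Then $T(x^*)=x^*$ if and only if $V(x^*)=P_{\psi(x^*)}(V(x^*)-\mu x^*)$, which by the Remark is precisely the condition that $x^*$ solves IQVIP. Consequently, it suffices to show that $T$ is a strict contraction on $\mathbb{R}^n$, and Banach's fixed-point theorem will deliver both existence and uniqueness of $x^*$.

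To estimate $\|T(x)-T(y)\|$, introduce $u := V(x)-\mu x$, $v := V(y)-\mu y$ and $D := u-v = (V(x)-V(y))-\mu(x-y)$. A direct computation, using that $(x-y)-\frac{1}{\mu}(V(x)-V(y)) = -\frac{1}{\mu}D$, yields
\[
\mu\bigl(T(x)-T(y)\bigr) = \bigl[P_{\psi(x)}(u)-P_{\psi(y)}(u)\bigr] + \bigl[P_{\psi(y)}(u)-P_{\psi(y)}(v)-D\bigr].
\]
The first bracket is bounded by $\rho\|x-y\|$ thanks to assumption~\eqref{Kappa1}. For $D$ itself, expanding $\|\mu(x-y)-(V(x)-V(y))\|^2$ and invoking the $\eta$-strong monotonicity and $L$-Lipschitz continuity of $V$ gives $\|D\|\leq \sqrt{L^2-2\eta\mu+\mu^2}\,\|x-y\|$.

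The crucial step is the bound $\|P_{\psi(y)}(u)-P_{\psi(y)}(v)-D\|\leq \|D\|$, which I would obtain from the well-known firm nonexpansiveness of the metric projection: since $P_{\psi(y)}$ is firmly nonexpansive, so is its complement $I-P_{\psi(y)}$, and the bracket in question equals $-\bigl[(I-P_{\psi(y)})(u)-(I-P_{\psi(y)})(v)\bigr]$, whose norm is at most $\|u-v\|=\|D\|$. Combining these three estimates gives
\[
\|T(x)-T(y)\|\leq \frac{\rho+\sqrt{L^2-2\eta\mu+\mu^2}}{\mu}\,\|x-y\|,
\]
and hypothesis~\eqref{Kappa2} makes the contraction factor strictly less than one.

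The subtlety I expect to be the main obstacle is precisely this combined-term estimate. A naive triangle inequality splits $\|P_{\psi(y)}(u)-P_{\psi(y)}(v)-D\|$ into $\|P_{\psi(y)}(u)-P_{\psi(y)}(v)\| + \|D\|$, which by mere nonexpansiveness is only $2\|D\|$, enlarging the contraction coefficient to $(\rho+2\sqrt{L^2-2\eta\mu+\mu^2})/\mu$ and making it impossible to conclude from~\eqref{Kappa2}. Recognizing that $P_{\psi(y)}$ is not just nonexpansive but firmly nonexpansive, so that the residual $I-P_{\psi(y)}$ itself is nonexpansive, is what aligns the argument with the sharp hypothesis \eqref{Kappa2}; the remaining manipulations are routine.
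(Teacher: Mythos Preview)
Your argument is correct. The paper itself does not supply a proof of this proposition; it merely cites \cite[Theorem~3.2]{Dey2023}. Your Banach fixed-point approach with the map $T(x)=x+\mu^{-1}\bigl[P_{\psi(x)}(V(x)-\mu x)-V(x)\bigr]$ is precisely the natural route, and the decisive observation---that the term $P_{\psi(y)}(u)-P_{\psi(y)}(v)-(u-v)$ equals $-[(I-P_{\psi(y)})(u)-(I-P_{\psi(y)})(v)]$ and is therefore bounded by $\|u-v\|$ via firm nonexpansiveness of the projection---is exactly what yields the sharp contraction factor $\bigl(\rho+\sqrt{L^{2}-2\eta\mu+\mu^{2}}\bigr)/\mu$ matching hypothesis~\eqref{Kappa2}. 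Since the statement of~\eqref{Kappa2} is visibly tailored to this very estimate, your proof is almost certainly the same as the one in the cited reference; in any case it is complete and self-contained. (Note also that $L^{2}-2\eta\mu+\mu^{2}\ge (L-\mu)^{2}\ge 0$ because strong monotonicity and Lipschitz continuity force $\eta\le L$, so the square root is always well defined.)
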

\begin{Remark}
    The set-valued mapping $\psi$ appears in many applications. For example, we can express the point image as 
    \[\psi(x)=g(x)+\Psi,\]
where $\Psi$ is a closed convex subset in $\mathbb{R}^n$ and $g(x): \mathbb{R}^n\rightarrow \mathbb{R}^n$ is Lipschitz continuous with constant $l$. Then, the set-valued mapping $\psi$ satisfies \eqref{Kappa1} with $\rho = l$.
\end{Remark}
\section{Second-order dynamical system}
In this part, for Lebesgue measurable functions $\sigma, \tau :[0,+\infty) \rightarrow [0,+\infty)$, we propose the following second order dynamical system for solving IQVIP \eqref{IQVI}
\begin{equation}\label{MainDyn}
    \begin{cases}
    \Ddot{x}(t)+\sigma(t) \dot{x}(t)+\tau(t) \left(V(x)-P_{\psi(x)}(V(x)-\mu x)\right)=0,\\
    x(0)=a_0,\dot{x}(0)=b_0.
    \end{cases}
\end{equation}
\subsection{Existence and uniqueness of solution}
In this subsection, we study when the trajectory of the dynamical system \eqref{MainDyn} exists and is unique. We recall the definitions of the absolutely continuous functions and strong global solution of \eqref{MainDyn} (see \cite{Vuong2020}).
\begin{Definition}
    A function $h:[0,q] \rightarrow \mathbb{R}^n$ (where $q>0)$ is an absolutely continuous function if $h$ has one of the following equivalent conditions:
    \item[\rm{(a)}] There exists an integrable function $g:[0,q]\rightarrow \mathbb{R}^n$ fulfilling
    \[h(t)=\int_0^t g(u)du + h(0),\quad \forall t\in [0,q].\]
    \item[\rm{(b)}] $h$ is continuous. Besides, the distributional derivative $\dot{h}$ is Lesbegue integrable on $[0,q]$.
\end{Definition}
\begin{Definition}
    We call a function $x:[0,+\infty)\rightarrow \mathbb{R}^n$ a strong global solution of the dynamical system \eqref{MainDyn} if the following properties hold:
    \item[\rm{(a)}] For any $0<q<+\infty$, $x$ is absolutely continuous on $[0,q]$ ($x$ is also called local absolute continuous function).
    \item[\rm{(b)}] $\Ddot{x}(t)+\sigma(t)\dot{x}(t)+\tau(t)\left(V(x(t))-P_{\psi(x(t))}(V(x(t))-\mu x(t))\right) = 0$ for almost every $t\in [0,+\infty)$.
    \item[\rm{(c)}] $x(0)=a_0$ and $ \dot{x}(0)=b_0$.
\end{Definition}
We are ready to prove the existence and uniqueness of the trajectory to \eqref{MainDyn}. The result below is similar to \cite[Theorem 4]{Bot2016}, but we give a proof, for a completeness of exposition.
\begin{Theorem}\label{UniDyn}
    Let $\sigma, \tau: [0,+\infty) \rightarrow [0,+\infty)$ be Lesbegue measurable functions such that $\sigma, \tau \in L_{loc}^1\left([0,+\infty)\right)$ (that is, $\sigma,\tau \in L_{loc}^1\left([0,s]\right) $ for every $0<s<+\infty$). Let $\psi: \mathbb{R}^n \rightrightarrows \mathbb{R}^n$ and $V:\mathbb{R}^n \rightarrow \mathbb{R}^n$ be mappings with \rm{($\Gamma_1$)}, \rm{($\Gamma_2$)}-condition, respectively. Assume that there exists a number $\rho>0$ such that
    \begin{equation} \label{Lips1}
        \|P_{\psi(r)}(y) -P_{\psi(s)}(y)\| \leq \rho \|r-s\|, \quad \forall y,r,s \in \mathbb{R}^n.
    \end{equation}
    Then for $a_0,b_0 \in \mathbb{R}^n$, there exists a unique strong global solution of the dynamical system \eqref{MainDyn}.
\end{Theorem}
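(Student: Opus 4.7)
The plan is to recast the second-order equation \eqref{MainDyn} as an equivalent first-order system in $\mathbb{R}^{2n}$ and then invoke the Carathéodory version of the Cauchy-Lipschitz theorem, first locally and then globally via a Gronwall argument. Introduce
\[ G(u) := V(u) - P_{\psi(u)}\bigl(V(u) - \mu u\bigr) \]
and set $y(t) := \bigl(x(t), \dot{x}(t)\bigr) \in \mathbb{R}^{2n}$, so that \eqref{MainDyn} is equivalent to
\[ \dot{y}(t) = F\bigl(t, y(t)\bigr), \qquad y(0) = (a_0, b_0), \]
where $F\bigl(t,(u,v)\bigr) := \bigl(v,\; -\sigma(t) v - \tau(t) G(u)\bigr)$.

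The main step is to establish that $G:\mathbb{R}^n \to \mathbb{R}^n$ is globally Lipschitz. For any $u_1, u_2 \in \mathbb{R}^n$ the triangle inequality yields
\[ \|G(u_1) - G(u_2)\| \le \|V(u_1) - V(u_2)\| + \bigl\|P_{\psi(u_1)}(V(u_1) - \mu u_1) - P_{\psi(u_2)}(V(u_2) - \mu u_2)\bigr\|. \]
I would decompose the projection difference by inserting $\pm P_{\psi(u_1)}(V(u_2) - \mu u_2)$. The first piece is controlled by the non-expansiveness of the metric projection (Proposition~\ref{ProjectionProperties}(a)) combined with the $L$-Lipschitz property of $V$, giving at most $(L + \mu)\|u_1 - u_2\|$. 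The second piece is bounded by $\rho\|u_1 - u_2\|$ directly by hypothesis \eqref{Lips1}. Summing these with the leading $\|V(u_1) - V(u_2)\| \le L\|u_1 - u_2\|$ term yields
\[ \|G(u_1) - G(u_2)\| \le \ell\, \|u_1 - u_2\|, \qquad \ell := 2L + \mu + \rho. \]

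Given this Lipschitz estimate, $F$ satisfies the Carathéodory conditions on $[0,+\infty)\times\mathbb{R}^{2n}$: $F(\cdot, y)$ is Lebesgue measurable because $\sigma,\tau$ are, $F(t,\cdot)$ is continuous, and for almost every $t$
\[ \|F(t, y_1) - F(t, y_2)\| \le k(t)\, \|y_1 - y_2\|, \qquad k(t) := 1 + \sigma(t) + \ell\, \tau(t), \]
where $k \in L^1_{loc}([0,+\infty))$ by assumption on $\sigma,\tau$. A standard Carathéodory--Picard argument on an arbitrary interval $[0,q]$ therefore produces a unique local absolutely continuous solution, and the pointwise bound $\|F(t,y)\| \le \|F(t,0)\| + k(t)\|y\|$ together with Gronwall's inequality rules out finite-time blow-up, so the solution extends uniquely to the whole halfline.

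The most delicate part is the Lipschitz estimate for $G$, since it is where the non-expansiveness of the projection, the Lipschitz property of $V$, and the set-valued assumption \eqref{Lips1} must be woven together through the add-and-subtract trick; the subsequent passage to the Carathéodory setting and its global extension follows the scheme of \cite[Theorem 4]{Bot2016} and is essentially routine once the explicit Lipschitz constant $\ell$ is in hand.
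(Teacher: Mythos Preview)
Your proposal is correct and follows essentially the same route as the paper: both reduce \eqref{MainDyn} to a first-order system in $\mathbb{R}^{2n}$, establish the global Lipschitz continuity of the map $u\mapsto V(u)-P_{\psi(u)}(V(u)-\mu u)$ with the identical constant $2L+\mu+\rho$ via the same add-and-subtract decomposition of the projection term, and then appeal to a Carath\'eodory/Cauchy--Lipschitz--Picard theorem (the paper cites Haraux, you phrase it as Carath\'eodory plus Gronwall, but these are the same mechanism). The only cosmetic difference is that the paper computes the time-dependent Lipschitz constant of the first-order vector field as $1+\sqrt{2}\,\sigma(t)+\sqrt{2}\,\ell\,\tau(t)$ and separately checks local integrability of $t\mapsto F(t,p,q)$, whereas you use the slightly tighter $1+\sigma(t)+\ell\,\tau(t)$ and fold the no-blow-up step into a Gronwall bound; both are routine.
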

\begin{proof}
Let us define the mapping $B:\mathbb{R}^n \rightarrow \mathbb{R}^n$ by setting
    \[B(x):= V(x)-P_{\psi(x)}(V(x)-\mu x),\]
then we can rewrite equivalently the dynamical system \eqref{MainDyn} as 
\begin{equation}\label{SecDyn2}
    \begin{cases}
    \Ddot{x}(t)+\sigma(t) \dot{x}(t)+\tau(t) B(x(t))=0,\\
    x(0)=a_0,\dot{x}(0)=b_0.
    \end{cases}
\end{equation}
Since $V$ is $L$-Lipschitz continuous, by Theorem \eqref{ProjectionProperties} (a) and \eqref{Lips1}, for all $a,b \in \mathbb{R}^n$ and $\mu>0$, we have
\begin{align}\label{LipB}
   &\|B(a)-B(b)\| \notag \\
   &= \|\left(V(a)-P_{\psi(a)}(V(a)-\mu a)\right)-\left(V(b)-P_{\psi(b)}(V(b)-\mu b )\right)\|\notag\\
    &\leq \|P_{\psi (a)}(V(a)-\mu a )-P_{\psi (a)}(V(b)-\mu b)\|+\|P_{\psi (a)}(V(b)-\mu b )-P_{\psi (b)}(V(b)-\mu b)\| +\|V(a)-V(b)\| \notag\\
    &\leq (L+\rho)\|a-b\|+ \|(V(a)-\mu a)-(V(b)-\mu b)\|\notag\\
    &\leq (2L+\rho+\mu)\|a-b\|;   
\end{align}
which implies that $B$ is Lipschitz continuous with modulus $L_1=2L+\rho+\mu$. We can rewrite equivalently the second-order dynamical system \eqref{MainDyn} as the form of a first-order dynamical system in $\mathbb{R}^n\times \mathbb{R}^n$:
\begin{equation}\label{SecDyn3}
    \begin{cases}
    \dot{G}(t)=K(t,G(t)),\\
    G(0)=(a_0,b_0),
    \end{cases}
\end{equation}
where 
\[G:[0,+\infty) \rightarrow \mathbb{R}^n\times \mathbb{R}^n, \quad G(t)=(x(t),\dot{x}(t))\]
and
\[K:[0,+\infty)\times \mathbb{R}^n\times \mathbb{R}^n \rightarrow \mathbb{R}^n\times \mathbb{R}^n, \quad K(t,u,v) =(v, -\tau(t)B(u)-\sigma (t) v ).\]
In $\mathbb{R}^n\times \mathbb{R}^n$, we endow inner product $\langle(a,b),(c,d)\rangle_{\mathbb{R}^n\times \mathbb{R}^n}= \langle a,c\rangle+\langle b,d\rangle $ and corresponding norm $\|(a,b)\|_{\mathbb{R}^n\times \mathbb{R}^n}=\sqrt{\|a\|^2+\|b\|^2}$.\newline
For any $u_1,u_2,v_1,v_2 \in \mathbb{R}^n$ and $t\geq 0$, since $B$ is $L_1$- Lipschitz, we have
\begin{align*}
   \|K(t,u_2,v_2)-K(t,u_1,v_1)\|_{\mathbb{R}^n\times \mathbb{R}^n} &= \sqrt{\|v_1-v_2\|^2+\|\sigma(t)(v_1-v_2)+\tau(t)(B(u_1)-B(u_2))\|^2 } \\
   & \leq \sqrt{(1+2\sigma^2(t))\|v_1-v_2\|^2+2L_1^2\tau^2(t)\|u_2-u_1\|^2} \\
   & \leq \sqrt{1+2\sigma^2(t)+2L_1^2\tau^2(t)}\|(u_2,v_2)-(u_1,v_1)\|_{\mathbb{R}^n\times \mathbb{R}^n} \\
   &\leq (1+\sqrt{2}\sigma(t)+L_1 \sqrt{2}\tau(t))\|(u_2,v_2)-(u_1,v_1)\|_{\mathbb{R}^n\times \mathbb{R}^n}.
\end{align*}
Besides, since $\sigma, \tau \in L_{loc}^1\left([0,+\infty)\right)$, then for fixed $t>0$, the Lipschitz constant $L_t=1+\sqrt{2}\sigma(t)+L_1 \sqrt{2}\tau(t)$ of $K(t,\cdot,\cdot)$ is locally integrable. \newline
Next, we need prove that
\begin{equation}\label{loc_int}
    \forall p, q \in \mathbb{R}^n, \quad \forall s>0,\quad K(\cdot,p,q)\in L^1\left([0,s],\mathbb{R}^n\times \mathbb{R}^n\right).
\end{equation}
Indeed, with any $p,q\in \mathbb{R}^n$ and $s>0$, we have
\begin{align*}
    \int_0^s\|K(t,p,q)\|_{\mathbb{R}^n\times \mathbb{R}^n}dt &= \int_0^s\sqrt{\|q\|^2+\|\sigma(t)q+\tau(t)B(p)\|^2}dt \\
    &\leq \int_0^s\sqrt{(1+2\sigma^2(t))\|q\|^2+2\tau^2(t)\|B(p)\|^2}dt \\
    & \leq \int_0^s \left((1+\sqrt{2}\sigma(t))\|q\|+\sqrt{2}\tau(t)\|B(p)\|\right)dt.
\end{align*}
By using the assumptions made on $\sigma$ and $\tau$, we get \eqref{loc_int}. \newline
In view of the Cauchy-Lipschitz-Picard theorem (see, \cite{Haraux1991}, Proposition 6.2.1), we receive the existence and uniqueness of the strong global solution to \eqref{SecDyn3}. Because \eqref{MainDyn}, \eqref{SecDyn2} and \eqref{SecDyn3} are equivalent, we have the desired result.
\end{proof}
\subsection{Exponential convergence}

Before showing the exponential convergence of trajectory $x(t)$ generated by dynamical system \eqref{MainDyn}, we need the following result, which will play an important role  in this convergence analysis. 
\begin{Proposition} 
Let $\psi: \mathbb{R}^n \rightrightarrows \mathbb{R}^n$ and $V:\mathbb{R}^n \rightarrow \mathbb{R}^n$ be mappings with \rm{($\Gamma_1$)}, \rm{($\Gamma_2$)}-condition, respectively. Assume that conditions \eqref{Kappa1}, \eqref{Kappa2} hold and
\[\theta:=\eta-\rho-\frac{1}{2}-\frac{1}{2}L^2-\frac{1}{2}\mu^2+\mu\eta>0.\]
Let $x^*$ be a unique solution of the IQVIP \eqref{IQVI}. For all $w\in \mathbb{R}^n$ we have
\begin{equation*}
    \theta_1\|V(w)-P_{\psi(w)}(V(w)-\mu w )\|^2 \leq \langle V(w)-P_{\psi(w)}(V(w)-\mu w ), w-x^* \rangle,
\end{equation*}
where $\theta_1= \frac{\theta}{(2L+\rho+\mu)^2}$ and
\begin{equation*}
    \theta \|w-x^*\|\leq \|V(w)-P_{\psi(w)}(V(w)-\mu w )\|.
\end{equation*}
\end{Proposition}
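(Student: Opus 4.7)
The plan is to establish a strong-monotonicity-type inequality at $x^*$ for the operator
\[
B(w) := V(w) - P_{\psi(w)}(V(w)-\mu w),
\]
namely
\[
\langle B(w), w-x^*\rangle \geq \theta\,\|w-x^*\|^2,
\]
and then derive both inequalities from this together with the Lipschitz bound $\|B(w)\|\le L_1\|w-x^*\|$ with $L_1=2L+\rho+\mu$ (which holds by the computation already performed in the proof of Theorem~\ref{UniDyn}, using $B(x^*)=0$).

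To prove the key inequality, I would start from $B(x^*)=0$, which follows from the projection reformulation in the Remark, and write
\[
\langle B(w), w-x^*\rangle = \langle V(w)-V(x^*), w-x^*\rangle + \langle V(x^*)-P_{\psi(w)}(V(w)-\mu w), w-x^*\rangle.
\]
The first term is bounded below by $\eta\|w-x^*\|^2$ via $(\Gamma_2)$-strong monotonicity. For the second term, I insert $P_{\psi(w)}(V(x^*)-\mu x^*)$ to split it into
\[
A_1 := P_{\psi(x^*)}(V(x^*)-\mu x^*) - P_{\psi(w)}(V(x^*)-\mu x^*)
\]
and
\[
A_2 := P_{\psi(w)}(V(x^*)-\mu x^*) - P_{\psi(w)}(V(w)-\mu w),
\]
using $V(x^*)=P_{\psi(x^*)}(V(x^*)-\mu x^*)$. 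By \eqref{Kappa1}, $\|A_1\|\le\rho\|w-x^*\|$; by nonexpansivity of $P_{\psi(w)}$ combined with expansion of $\|(V(x^*)-V(w))-\mu(x^*-w)\|^2$ and $\eta$-strong monotonicity of $V$, $\|A_2\|^2 \leq (L^2-2\mu\eta+\mu^2)\|w-x^*\|^2$ (note $(\Gamma_2)$ and \eqref{Kappa2} make this nonnegative).

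The main arithmetical obstacle — and the reason $\theta$ has that particular form — is handling $\langle A_2, w-x^*\rangle$: a naive Cauchy--Schwarz gives the square-root bound $\sqrt{L^2-2\mu\eta+\mu^2}$, which is not what appears in $\theta$. I would instead use Young's inequality with unit weights,
\[
\langle A_2, w-x^*\rangle \geq -\tfrac{1}{2}\|A_2\|^2 - \tfrac{1}{2}\|w-x^*\|^2 \geq -\tfrac{1}{2}(L^2-2\mu\eta+\mu^2+1)\|w-x^*\|^2,
\]
while Cauchy--Schwarz on $A_1$ gives $\langle A_1, w-x^*\rangle \geq -\rho\|w-x^*\|^2$. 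Collecting all three contributions yields exactly
\[
\langle B(w), w-x^*\rangle \geq \Bigl(\eta - \rho - \tfrac{1}{2} - \tfrac{1}{2}L^2 - \tfrac{1}{2}\mu^2 + \mu\eta\Bigr)\|w-x^*\|^2 = \theta\|w-x^*\|^2.
\]

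From here both claims follow quickly. For the second inequality, Cauchy--Schwarz gives $\theta\|w-x^*\|^2 \leq \langle B(w),w-x^*\rangle \leq \|B(w)\|\,\|w-x^*\|$, whence $\theta\|w-x^*\|\leq \|B(w)\|$. For the first, combining the key inequality with $\|w-x^*\|\geq \|B(w)\|/L_1$ (itself obtained by Lipschitz continuity of $B$ together with $B(x^*)=0$) yields
\[
\langle B(w),w-x^*\rangle \geq \theta\|w-x^*\|^2 \geq \frac{\theta}{L_1^2}\|B(w)\|^2 = \theta_1\|B(w)\|^2,
\]
as required.
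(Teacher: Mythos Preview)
Your proof is correct and follows essentially the same route as the paper: the same three-term decomposition of $\langle B(w),w-x^*\rangle$ (strong monotonicity on $V(w)-V(x^*)$, Cauchy--Schwarz plus \eqref{Kappa1} on the $A_1$ piece, Young's inequality plus nonexpansivity on the $A_2$ piece), and then the same combination with the Lipschitz bound $\|B(w)\|\le(2L+\rho+\mu)\|w-x^*\|$ from \eqref{LipB} to obtain both inequalities. The only cosmetic difference is that the paper displays the Young step and the nonexpansivity step in successive lines of one chain, whereas you name the pieces $A_1,A_2$ explicitly.
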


\begin{proof}
 One the one hand, as shown in \eqref{LipB}, we have
\begin{align}\label{A}
\|V(w)-P_{\psi(w)}(V(w)-\mu w)\|&=\|\left(V(w)-P_{\psi(w)}(V(w)-\mu w)\right)-\left(V(x^*)-P_{\psi(x^*)}(V(x^*)-\mu x^* )\right)\|\notag\\
 &\leq (2L+\rho+\mu)\|w-x^*\|.
\end{align}
On the other hand, we have
\begin{align}\label{B}
&\langle V(w)-P_{\psi(w)}(V(w)-\mu w),w-x^*\rangle\notag\\
& = \langle V(w)-V(x^*),w-x^*\rangle - \langle P_{\psi (w)}(V(w)-\mu w )-P_{\psi (w)}(V(x^*)-\mu x^*),w-x^*\rangle \notag \\
\quad &- \langle P_{\psi (w)}(V(x^*)-\mu x^* )-P_{\psi (x^*)}(V(x^*)-\mu x^*),w-x^*\rangle \notag \\
&\geq \left(\eta-\rho -\frac{1}{2}\right) \|w-x^*\|^2-\frac{1}{2}\|P_{\psi (w)}(V(w)-\mu w )-P_{\psi (w)}(V(x^*)-\mu x^*)\|^2 \notag \\
& \geq \left(\eta-\rho -\frac{1}{2}\right)\|w-x^*\|^2 - \frac{1}{2}\|(V(w)- \mu w) - (V(x^*)-\mu x^*)\|^2 \notag \\
&\geq \left(\eta-\rho-\frac{1}{2}-\frac{1}{2}L^2-\frac{1}{2}\mu^2+\mu\eta\right)\|w-x^*\|^2.
\end{align}
Combining \eqref{A} and \eqref{B}, we get
\begin{align}\label{C}
   \theta_1 \|V(w)-P_{\psi (w)}(V(w)-\mu w)\|^2 &= \frac{\eta-\rho-\frac{1}{2}-\frac{1}{2}L^2-\frac{1}{2}\mu^2+\mu\eta}{(2L+\rho+\mu)^2}\|V(w)-P_{\psi (w)}(V(w)-\mu w)\|^2 \notag \\
   &\leq \langle V(w)-P_{\psi(w)}(V(w)-\mu w),w-x^*\rangle. 
\end{align}
We also have from \eqref{B} that
\begin{align}\label{CC}
\left(\eta-\rho-\frac{1}{2}-\frac{1}{2}L^2-\frac{1}{2}\mu^2+\mu\eta\right)\|w-x^*\|^2 &\leq \langle V(w)-P_{\psi(w)}(V(w)-\mu w),w-x^*\rangle \\
    &\leq  \|V(w)-P_{\psi(w)}(V(w)-\mu w)\|\cdot\|w-x^*\|. \notag
\end{align}
Thus
\begin{equation}\label{D}
   \theta\|w-x^*\|\leq   \|V(w)-P_{\psi(w)}(V(w)-\mu w)\|.
\end{equation}   
\end{proof}

\bigskip

The main result of this section is as follows.
\begin{Theorem}
Let $\psi: \mathbb{R}^n \rightrightarrows \mathbb{R}^n$ and $V:\mathbb{R}^n \rightarrow \mathbb{R}^n$ be mappings with \rm{($\Gamma_1$)}, \rm{($\Gamma_2$)}-condition, respectively. Assume that
\[\theta:=\eta-\rho-\frac{1}{2}-\frac{1}{2}L^2-\frac{1}{2}\mu^2+\mu\eta>0,\]
and 
\begin{equation}\label{UniIqvi}
    \sqrt{L^2-2\eta\mu+\mu^2}+\rho <\mu,
\end{equation}
where $\rho$ satisfies
\begin{equation}\label{Kappa3}
    \|P_{\psi(r)}(y) -P_{\psi(s)}(y)\| \leq \rho \|r-s\|, \quad \forall y,r,s \in \mathbb{R}^n.
\end{equation}

Let $\sigma, \tau :[0,+\infty)\rightarrow [0,+\infty)$ be locally absolutely continuous functions satisfying for every $t\in [0,+\infty)$ that
\item {(i)} $1<\sigma\leq \sigma(t)\leq \theta^2\theta_1\tau(t)+1$;
\item{(ii)} $\dfrac{d}{dt}\left(\dfrac{\sigma(t)}{\tau(t)}\right)\leq 0$ and $\dot{\sigma}(t)\leq 0$;
\item{(iii)} $\sigma^2(t)-\sigma(t)- \dfrac{2\tau(t)}{\theta_1}\geq 0$. \newline
Then the trajectory $x(t)$ generated by the dynamical system \eqref{MainDyn} converges exponentially to $x^*$ as $t\rightarrow +\infty$ where $x^*$ is the unique solution of the IQVIP \eqref{IQVI}, i.e., there exist positive constants $\nu,\zeta$ such that
\[\|x(t)-x^*\|\leq \nu\|x(0)-x^*\|e^{-\zeta t},\, \forall t\geq 0.\]
\end{Theorem}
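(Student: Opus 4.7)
The plan is a Lyapunov (energy) function argument. I would first write the dynamical system in the compact form $\ddot x(t) + \sigma(t)\dot x(t) + \tau(t) B(x(t)) = 0$, where $B(x) := V(x) - P_{\psi(x)}(V(x) - \mu x)$, and note that $x^*$ is the unique zero of $B$. The preceding proposition immediately gives the two fundamental inequalities that will drive everything:
\begin{equation*}
\theta_1 \|B(x)\|^2 \leq \langle B(x),\, x - x^*\rangle, \qquad \theta\|x-x^*\| \leq \|B(x)\|,
\end{equation*}
so that $B$ behaves, in effect, like a strongly monotone and cocoercive operator with moduli depending on $\theta,\theta_1$. These are exactly the ingredients one needs in a second-order Lyapunov analysis.

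Next I would introduce a Lyapunov functional of the form
\begin{equation*}
\mathcal{E}(t) \;=\; \|\dot x(t)\|^2 \;+\; \sigma(t)\,\|x(t)-x^*\|^2 \;+\; 2\,\langle \dot x(t),\, x(t)-x^*\rangle,
\end{equation*}
and check that the condition $\sigma(t) \geq \sigma > 1$ in (i) makes $\mathcal{E}(t)$ coercive in $\|x(t) - x^*\|^2$; specifically a Young's inequality gives $\mathcal{E}(t) \geq (1-\alpha)\|\dot x\|^2 + (\sigma - 1/\alpha)\|x - x^*\|^2$ for any $\alpha \in (1/\sigma, 1)$, so $\mathcal{E}$ controls $\|x(t) - x^*\|^2$ from below by a positive constant.

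Differentiating $\mathcal{E}(t)$ along trajectories and substituting $\ddot x = -\sigma \dot x - \tau B(x)$, the $\langle \dot x, x - x^*\rangle$ cross-terms cancel and one obtains
\begin{equation*}
\dot{\mathcal{E}}(t) \;=\; (2 - 2\sigma)\|\dot x\|^2 \;+\; \dot\sigma \|x - x^*\|^2 \;-\; 2\tau \langle B(x), \dot x\rangle \;-\; 2\tau\langle B(x),\, x - x^*\rangle.
\end{equation*}
The first monotonicity inequality turns the last summand into $-2\tau\theta_1\|B(x)\|^2$, and a Young estimate with weight $\theta_1$ gives $-2\tau\langle B(x),\dot x\rangle \leq \tau\theta_1\|B(x)\|^2 + (\tau/\theta_1)\|\dot x\|^2$. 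Combining and using $\|B(x)\|^2 \geq \theta^2\|x-x^*\|^2$ from the second inequality leads to
\begin{equation*}
\dot{\mathcal{E}}(t) \;\leq\; -\Bigl(2\sigma - 2 - \tfrac{\tau}{\theta_1}\Bigr)\|\dot x\|^2 \;+\; \bigl(\dot\sigma - \tau\theta_1\theta^2\bigr)\|x - x^*\|^2.
\end{equation*}
At this point conditions (i)--(iii) enter decisively: (iii) ensures positivity of the coefficient of $\|\dot x\|^2$, (i)'s upper bound $\sigma(t) - 1 \leq \theta^2\theta_1 \tau(t)$ together with (ii) makes the coefficient of $\|x-x^*\|^2$ negative, and the monotonicity $d(\sigma/\tau)/dt \leq 0$ from (ii) is used to compare these two negative quantities against $\mathcal{E}(t)$ itself, yielding a uniform estimate $\dot{\mathcal{E}}(t) \leq -\kappa\, \mathcal{E}(t)$ for some $\kappa > 0$.

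From here Gronwall's inequality gives $\mathcal{E}(t) \leq \mathcal{E}(0) e^{-\kappa t}$, and coercivity of $\mathcal{E}$ in $\|x(t) - x^*\|^2$ yields the desired exponential estimate $\|x(t) - x^*\| \leq \nu\,\|x(0) - x^*\| e^{-\zeta t}$ with $\zeta = \kappa/2$ and $\nu$ depending on $\sigma, b_0$. The main obstacle is the last step: calibrating the coefficients of $\mathcal{E}$ (and the Young weights) so that the resulting differential inequality $\dot{\mathcal{E}} \leq -\kappa \mathcal{E}$ is precisely equivalent to the three conditions (i)--(iii) that are imposed on $\sigma$ and $\tau$; any other bookkeeping would either force stronger hypotheses on $\sigma, \tau$ or lose the exponential rate. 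A secondary technical point is that $\sigma, \tau$ are only locally absolutely continuous, so the derivative computation of $\dot{\mathcal{E}}$ holds only almost everywhere and the Gronwall step must be applied in its a.e.\ form.
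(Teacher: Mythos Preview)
Your Lyapunov function is reasonable, but the step where you invoke conditions (i)--(iii) to close the argument does not go through. Concretely, after your Young estimate you obtain
\[
\dot{\mathcal E}(t)\;\le\;-\Bigl(2\sigma(t)-2-\tfrac{\tau(t)}{\theta_1}\Bigr)\|\dot x\|^{2}+(\dot\sigma-\tau\theta_1\theta^{2})\|x-x^*\|^{2},
\]
and you then assert that (iii) makes the first coefficient positive. But (iii), namely $\sigma^{2}-\sigma\ge 2\tau/\theta_1$, only gives $\tau/\theta_1\le\sigma(\sigma-1)/2$, and hence
\[
2\sigma-2-\tfrac{\tau}{\theta_1}\;\ge\;\tfrac{(\sigma-1)(4-\sigma)}{2},
\]
which is positive only if $\sigma(t)<4$. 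No such upper bound is assumed; in fact combining (i) and (iii) forces $\sigma\ge 2/(\theta^{2}\theta_1^{2})$, which is typically very large. So the coefficient of $\|\dot x\|^{2}$ can be negative and your estimate collapses. The subsequent claim that $d(\sigma/\tau)/dt\le 0$ ``is used to compare these two negative quantities against $\mathcal E$'' is not an argument; as written there is no mechanism by which that monotonicity condition enters your inequality at all.

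The paper avoids this by \emph{not} using Young's inequality on $\langle B(x),\dot x\rangle$. Instead it works with $k(t)=\tfrac12\|x(t)-x^*\|^{2}$, derives the second-order identity $\ddot k+\sigma\dot k+\tau\langle B(x),x-x^*\rangle=\|\dot x\|^{2}$, and then substitutes the dynamical system back in the form $\tau B(x)=-(\ddot x+\sigma\dot x)$ to replace $\theta_1\tau\|B(x)\|^{2}$ by $\tfrac{\theta_1}{\tau}\|\ddot x+\sigma\dot x\|^{2}$. Expanding this square produces the coefficients $\tfrac{\theta_1\sigma^{2}}{2\tau}$ in front of $\|\dot x\|^{2}$ and $\beta(t)=\tfrac{\theta_1\sigma}{2\tau}$ in front of $\tfrac{d}{dt}\|\dot x\|^{2}$, and it is precisely these quantities that conditions (iii) and (ii) are tailored to: (iii) is equivalent to $\chi(t):=\tfrac{\theta_1\sigma^{2}}{2\tau}-1\ge\beta(t)$, and $d(\sigma/\tau)/dt\le 0$ is exactly $\dot\beta\le 0$. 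After multiplying by $e^{t}$ and integrating by parts one obtains the scalar inequality $\dot k+(\sigma-1)k\le Te^{-t}$, whose integration gives the exponential decay. The missing idea in your sketch is this back-substitution of the ODE to generate the $\sigma^{2}$ coefficient; a direct Young bound loses exactly the factor of $\sigma$ that (iii) is designed to exploit.
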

\begin {proof}
Using the assumption \eqref{Kappa3} and Theorem \ref{UniDyn}, we can show that the dynamical system \eqref{MainDyn} has a unique strong global solution. Besides, using \eqref{Kappa3} and \eqref{UniIqvi}, we obtain that the IQVIP \eqref{IQVI} has a unique solution $x^*$.

We consider the function $k(t)=\frac{1}{2}\|x(t)-x^*\|^2$ for every $t \in [0, + \infty)$. Then 
\[\dot{k} (t)=\langle x(t)-x^*,\dot{x}\rangle; \quad \Ddot{k}(t)= \langle x(t)-x^*,\Ddot{x}\rangle +\|\dot{x}(t)\|^2.\]
Taking the dynamical system \eqref{MainDyn} into account, for any $t\geq 0$ we get
\[\Ddot{k}(t)+\sigma(t)\dot{k}(t)+\tau(t)\langle V(x(t))-P_{\psi (x(t))}(V(x(t))-\mu x),x(t)-x^* \rangle - \|\dot{x} (t)\|^2 =0.\]
Combining with \eqref{C}, it yields
\[\Ddot{k}(t)+\sigma(t)\dot{k}(t)+\theta_1\tau(t)\| V(x(t))-P_{\psi (x(t))}(V(x(t))-\mu x(t))\|^2 - \|\dot{x} (t)\|^2 \leq 0.\]
From the dynamical system \eqref{MainDyn}, we infer
\[\Ddot{k}(t)+\sigma(t)\dot{k}(t)+\frac{1}{2}\theta_1\tau(t)\| V(x(t))-P_{\psi (x(t))}(V(x(t))-\mu x(t))\|^2+\frac{\theta_1}{2\tau(t)}\|\Ddot{x}(t)+\sigma(t)\dot{x}(t)\|^2 - \|\dot{x} (t)\|^2 \leq 0.\]
Using \eqref{D} we can deduce
\begin{equation}\label{6524}
\Ddot{k}(t)+\sigma(t)\dot{k}(t)+\theta^2\theta_1\tau(t) k(t)+\frac{\theta_1}{2\tau(t)}\|\Ddot{x}(t)\|^2+\frac{\theta_1\sigma(t)}{\tau(t)}\langle \Ddot{x}(t),\dot{x} (t)\rangle+\left(\frac{\theta_1\sigma^2(t)}{2\tau(t)}-1\right)\|\dot{x}(t)\|^2  \leq 0.
\end{equation}
Noticing that $\frac{\theta_1}{2\tau(t)}\|\Ddot{x}(t)\|^2\geq 0$ and $\langle \Ddot{x}(t),\dot{x} (t)\rangle = \dfrac{1}{2}\dfrac{d}{dt}\|\dot{x}(t)\|^2$, setting 
\[\alpha(t)=\theta^2\theta_1\tau(t), \quad \beta(t)=\frac{\theta_1\sigma(t)}{2\tau(t)},\quad \chi(t)=\frac{\theta_1\sigma^2(t)}{2\tau(t)}-1, \quad  \delta(t)=\|\dot{x}(t)\|^2, \quad \text{for every}\, t\in [0,+\infty), \]
we can rewrite \eqref{6524} as
\begin{equation}\label{E}
  \Ddot{k}(t)+\sigma(t)\dot{k}(t) +\alpha(t)k(t)+ \chi(t)\delta(t)+ \beta(t)\dot{\delta}(t)\leq 0.
\end{equation}
Besides, we have
\begin{align*}
    e^t\Ddot{k}(t)&
    =\dfrac{d}{dt}\left(e^t\dot{k}(t)\right)-\dfrac{d}{dt}\left(e^t k(t)\right)+e^t k(t) ,\notag \\
    \sigma (t)e^t\dot{k}(t)&= \sigma(t) \dfrac{d}{dt} \left(e^t k(t)\right)- \sigma(t)e^t k(t), \\
    \beta(t)e^t \dot{\delta}(t) &=\beta(t)\dfrac{d}{dt}\left(e^t\delta(t)\right)-\beta(t)e^t \delta(t).
\end{align*}
Multiplying both sides of \eqref{E} with $e^t$, and using above identities, we obtain
\begin{align}\label{F}
    \dfrac{d}{dt}\left(e^t\dot{k}(t)\right)+(\sigma(t)-1)\dfrac{d}{dt}\left(e^t k(t)\right)+ \left( \alpha(t)+1-\sigma(t)\right)e^t k(t) \notag\\ +(\chi(t)-\beta(t))e^t \delta(t)+\beta(t)\dfrac{d}{dt}\left(e^t \delta(t)\right)\leq 0.
\end{align}
It follows from the conditions (i) and (iii)  that  
\[\quad \chi(t)-\beta(t) \geq 0, \quad \alpha(t)+1-\sigma(t) \geq 0,  \quad \forall t \in [0,+\infty). \]
Therefore, from \eqref{F} we get
\begin{equation}\label{G}
   \dfrac{d}{dt}\left(e^t\dot{k}(t)\right) + (\sigma(t)-1)\dfrac{d}{dt}\left(e^t k(t)\right)+\beta(t)\dfrac{d}{dt}\left(e^t \delta(t)\right) \leq 0 .
\end{equation}
Using identities 
\begin{align*}
    (\sigma(t)-1)\dfrac{d}{dt}\left(e^t k(t)\right)&=\dfrac{d}{dt}\left[(\sigma(t)-1)e^t k(t)\right]-\dot{\sigma}(t)e^t k(t),\\
    \beta(t)\dfrac{d}{dt}\left( e^t \delta(t)\right)&=\dfrac{d}{dt}\left[\beta(t)e^t \delta(t)\right]-\dot{\beta}(t)e^t \delta(t),
\end{align*}
from \eqref{G} we obtain
\begin{equation}\label{H}
    \dfrac{d}{dt}\left(e^t \dot{k}(t)\right) +\dfrac{d}{dt}\left[(\sigma(t)-1)e^t k(t)\right]-\dot{\beta}(t)e^t \delta(t)-\dot{\sigma}(t)e^t k(t)+\dfrac{d}{dt}\left(\beta(t)e^t \delta(t)\right)\leq 0.
\end{equation}
Furthermore, we also have $\dot{\sigma}(t)\leq 0$ from assumption (ii) and $\dot{\beta}(t)\leq 0$. Thus, it follows from \eqref{H} that
\[\dfrac{d}{dt}\left[e^t \dot{k}(t)+\beta(t)e^t \delta(t)+(\sigma(t)-1)e^t k(t)\right]\leq 0, \]
which implies that the function defined by
\[t\mapsto e^t \dot {k} (t)+\beta(t)e^t \delta(t) +(\sigma(t)-1)e^t k(t)\]
is non-increasing. Therefore, there exists $T>0$ such that for any $t\in [0,+\infty)$
\[e^t \dot {k} (t)+\beta(t)e^t \delta(t) +(\sigma(t)-1)e^t k(t)\leq T.\]
Because of $\beta(t), \delta(t)\geq 0,\, \forall t\in [0,+\infty)$, we obtain
\[\dot{k} (t)+(\sigma(t)-1)k(t)\leq Te^{-t};\]
hence
\[\dot{k} (t)+(\sigma-1)k(t)\leq Te^{-t}.\]
Multiplying both sides of last inequality with $e^{(\sigma-1)t}>0$, we deduce
\[\dfrac{d}{dt}\left[e^{(\sigma-1)t}k(t)\right]\leq T e^{(\sigma-2)t}\]
for every $t\in [0,+\infty)$. Using integration, we have three following cases:\\
(a) if $1<\sigma<2$ then
\[e^{(\sigma-1)t}k(t)\leq \frac{T}{\sigma-2}\left[e^{(\sigma-2)t}-1\right]+k(0)\leq \frac{T}{2-\sigma}+k(0),\]
which implies
\[k(t)\leq e^{-(\sigma -1)t}\left[\frac{T}{2-\sigma}+k(0)\right];\]
(b) if $\sigma>2$ then 
\[e^{(\sigma-1)t}k(t)\leq \frac{T}{\sigma-2}\left[e^{(\sigma-2)t}-1\right]+k(0)\leq \frac{T}{\sigma-2}e^{(\sigma-2)t}+k(0),\]
which implies
\[k(t)\leq \frac{T}{\sigma-2}e^{-t}+k(0)e^{-(\sigma-1)t}\leq e^{-t}\left(\frac{T}{\sigma-2}+k(0)\right);\]
(c) if $\sigma =2$ then
\[0\leq k(t) \leq e^{-t}(k(0)+Tt).\]
From what have been shown, we conclude that $x(t)$ converges exponentially to $x^*$.
\end{proof}
\begin{Remark}
We verify that there exist functions $\sigma(t)$ and $\tau(t)$ satisfying conditions (i)-(iii). With $\sigma, \tau>1$, let $\sigma(t)=\sigma +\frac{1}{t+1}$ and $\tau(t)=\tau - \frac{1}{t+1}$ where $t\in [0,+\infty)$. By simple calculating, we can verify the condition (ii). Let us consider the condition (i) as
\[\sigma(t) \leq \theta^2\theta_1 \tau(t)+1,\]
or equivalently
\[\sigma+\frac{1}{t+1} \leq \theta^2\theta_1\left(\tau-\frac{1}{t+1}\right)+1,\quad \forall t\in [0,+\infty).\]
We deduce
\[\sigma \leq \theta^2\theta_1 \tau-(1+\theta^2\theta_1)\frac{1}{t+1}+1, \quad \forall t\in [0,+\infty).\]
Thus, we need
\begin{equation}\label{betacon1}
    \sigma \leq \theta^2\theta_1\tau -(1+\theta^2\theta_1)+1=\theta^2\theta_1(\tau-1). 
\end{equation}
If the condition (iii) is fulfilled, then we get
\[\left(\sigma+\frac{1}{t+1}\right)^2-\left(\sigma +\frac{1}{t+1}\right)-\frac{2}{\theta_1}\left(\tau -\frac{1}{t+1}\right) \geq 0,\]
which is equivalent to
\[\sigma^2-\sigma-\frac{2}{\theta_1}\tau +\frac{1}{(t+1)^2} +\frac{2}{\theta_1(t+1)}+\frac{2\sigma-1}{t+1}\geq 0,\, \forall t\in [0,+\infty). \]
Noting that $\sigma \geq 1$, hence we just need
\[\sigma^2-\sigma-\frac{2}{\theta_1}\tau \geq 0,\]
or
\begin{equation}\label{betacon2}
    \sigma \geq \frac{1}{2}+ \frac{1}{2}\sqrt{1+\frac{8\tau}{\theta_1}}.
\end{equation}
It follows from \eqref{betacon1} and \eqref{betacon2} that, if we choose $\tau$ large enough and $\sigma$ such that
\begin{equation}\label{betacon3}
\frac{1}{2}+ \frac{1}{2}\sqrt{1+\frac{8\tau}{\theta_1}} \leq \sigma \leq \theta^2\theta_1 (\tau-1),
\end{equation}
then the conditions (i)-(iii) hold.\\
On the other hand, if we choose $\sigma(t) =\sigma, \tau(t)=\tau$ for every $t\in [0,+\infty)$, where $\sigma, \tau$ satisfy \eqref{betacon3}, then the conditions (i)-(iii) still hold.
\end{Remark}
\section{Discretization of the dynamical system}
We consider the explicit discretization of dynamical system with respect to $t$ with the step size $h_n>0$, relaxation variable $\tau_n>0$, damping variable $\sigma_k>0$ and initial points $x_0$ and $x_1$ in $\mathbb {R}^n$ as follows:
\begin{equation*} 
    \frac{x_{n+1}-2x_n+x_{n-1}}{h_n^2}+\sigma_n \frac{x_n-x_{n-1}}{h_n}+\tau_n \left(V(x_n)-P_{\psi (x_n)}(V(x_n)-\mu x_n)\right)=0
\end{equation*}
or equivalently
\begin{equation*}
    x_{n+1}=x_n+(1-\sigma_n h_n)(x_n-x_{n-1})+\tau_n h_n^2 \left(P_{\psi (x_n)}(V(x_n)-\mu x_n)-V(x_n)\right).
\end{equation*}
If $h_n=1$, $\sigma_n,\tau_n$ are positive constants, we can write the above scheme as 
\begin{equation}\label{202403251017}
    \begin{cases}
        y_n:=x_n+(1-\sigma)(x_n-x_{n-1}),\\
        x_{n+1}=y_n+\tau\left(P_{\psi (x_n)}(V(x_n)-\mu x_n)-V(x_n)\right),
    \end{cases}
\end{equation}
which is a projection algorithm with an inertial effect term 
$(1-\sigma) (x_{n} - x_{n-1})$.\\

If $\sigma =1 $ then \eqref{202403251017} 
reduces to the projection algorithm 
\begin{equation}\label{1stAlg}
    x_{n+1}=x_n+\tau\left[P_{\psi(x_n)}(V(x_n)-\mu x_n)-V(x_n) \right].
\end{equation}
which was obtained from the discretization of a first order dynamical system studied in \cite{Thanh2024}.

Recall the operation of difference and its properties used in the convergence analysis.
\begin{gather*}
x^\dla(n):=x_{n+1}-x_n,\quad x^\nabla(n):=x_n-x_{n-1},\\
x^{\dla\nabla}:=(x^\dla)^\nabla,\quad x^{\nabla\dla}:=(x^\nabla)^\dla.
\end{gather*}

\begin{Proposition}\label{rem20220120}
It holds that
\begin{gather*}
\inner{h}{g}^\dla(n)=\inner{h^\dla(n)}{g_n}+\inner{h_n}{g^\dla(n)}+\inner{h^\dla(n)}{g^\dla(n)},\\
\inner{h}{g}^\nabla(n)=\inner{h^\nabla(n)}{g_n}+\inner{h_n}{g^\nabla(n)}-\inner{h^\nabla(n)}{g^\nabla(n)},\\
x^{\dla\nabla}(n)=x^{\nabla\dla}(n)=x_{n+1}-2x_{n}+x_{n-1}=x^\dla(n)-x^\nabla(n).
\end{gather*}
\end{Proposition}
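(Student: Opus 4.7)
The plan is to prove each of the three identities by direct algebraic expansion, using bilinearity of the inner product together with the defining relations $h_{n+1}=h_n+h^\dla(n)$ and $h_{n-1}=h_n-h^\nabla(n)$ (and analogously for $g$). No induction, no limiting argument, no monotonicity hypothesis is needed; the proposition is a purely discrete bookkeeping lemma, playing the role of a discrete product rule that will later be invoked in the convergence analysis of the iterative scheme \eqref{202403251017}.

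First I would establish the forward identity. Starting from $\inner{h}{g}^\dla(n) = \inner{h_{n+1}}{g_{n+1}} - \inner{h_n}{g_n}$, I substitute $h_{n+1}=h_n+h^\dla(n)$ and $g_{n+1}=g_n+g^\dla(n)$, then expand by bilinearity. The $\inner{h_n}{g_n}$ term cancels and what remains is exactly $\inner{h^\dla(n)}{g_n}+\inner{h_n}{g^\dla(n)}+\inner{h^\dla(n)}{g^\dla(n)}$. The backward identity is proved in the same way: from $\inner{h}{g}^\nabla(n)=\inner{h_n}{g_n}-\inner{h_{n-1}}{g_{n-1}}$, I substitute $h_{n-1}=h_n-h^\nabla(n)$ and $g_{n-1}=g_n-g^\nabla(n)$ and expand; the cross terms now contribute with a plus sign (two minus signs cancel when we subtract $\inner{h_{n-1}}{g_{n-1}}$ from $\inner{h_n}{g_n}$), while the quadratic term $\inner{h^\nabla(n)}{g^\nabla(n)}$ enters with a minus sign, yielding precisely the stated formula.

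For the third identity I would simply compute all four expressions and observe they coincide. By definition $x^{\dla\nabla}(n)=(x^\dla)^\nabla(n)=x^\dla(n)-x^\dla(n-1)=(x_{n+1}-x_n)-(x_n-x_{n-1})$, which equals $x_{n+1}-2x_n+x_{n-1}$. Similarly $x^{\nabla\dla}(n)=(x^\nabla)^\dla(n)=x^\nabla(n+1)-x^\nabla(n)=(x_{n+1}-x_n)-(x_n-x_{n-1})$, again equal to $x_{n+1}-2x_n+x_{n-1}$. Finally $x^\dla(n)-x^\nabla(n)=(x_{n+1}-x_n)-(x_n-x_{n-1})$ collapses to the same symmetric second difference.

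I do not anticipate any genuine obstacle: the only thing one has to be careful about is sign bookkeeping in the $\nabla$-identity, where the quadratic cross term flips sign relative to the $\dla$-case. All three statements follow from one-line expansions and the very definitions of $x^\dla(n)$, $x^\nabla(n)$, $x^{\dla\nabla}(n)$, and $x^{\nabla\dla}(n)$.
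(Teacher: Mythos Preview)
Your proposal is correct; the three identities follow exactly by the direct expansions you describe, and the sign bookkeeping in the $\nabla$-case is handled properly. The paper in fact states this proposition without proof, treating it as an elementary discrete product rule, so your argument is precisely the natural (and only reasonable) verification.
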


Using the difference operations, we can rewrite \eqref{202403251017} as follows
\begin{gather}\label{202403251018}
x^{\dla\nabla}(n)+\sigma x^\nabla(n)=\tau\left(P_{\psi (x_n)}(V(x_n)-\mu x_n)-V(x_n)\right).    
\end{gather}

Denote
\begin{gather*}
    a_n:=\|x^\dla(n)\|^2,\quad c_n:=\|x^\nabla(n)\|^2,\quad v_n:=\|x_n-x^*\|^2.
\end{gather*}


\begin{Theorem}\label{thm20240326}
Let $\psi: \mathbb{R}^n \rightrightarrows \mathbb{R}^n$ and $V:\mathbb{R}^n \rightarrow \mathbb{R}^n$ be mappings with \rm{($\Gamma_1$)}, \rm{($\Gamma_2$)}-condition, respectively. Assume that
    \item[(A1)]
\[\theta:=\eta-\rho-\frac{1}{2}-\frac{1}{2}L^2-\frac{1}{2}\mu^2+\mu\eta>0, \quad \sqrt{L^2-2\eta \mu+\mu^2}+\rho <\mu,\]
where $\rho$ satisfies
\[ \|P_{\psi(r)}(y) -P_{\psi(s)}(y)\| \leq \rho \|r-s\|, \quad \forall y,r,s \in \mathbb{R}^n\]
Denote
\begin{gather*}
    \theta_1:=\frac{\theta}{(2L+\rho+\mu)^2}.
\end{gather*}
Furthermore, coefficients $\sigma,\tau$ satisfy the following conditions
\item[(B1)]
\begin{gather*}
    0<\sigma<1.
\end{gather*}
\item[(B2)]
\begin{gather*}
    0<\tau<\theta_1\cdot\min\left\{\frac{1-\sigma}{4},\frac{\sigma^2}{4-\sigma}\right\}.
\end{gather*}
Then the sequence $\{x_n\}$ of \eqref{202403251018} converges linearly to the unique solution of the IQVIP \eqref{IQVI}.    
\end{Theorem}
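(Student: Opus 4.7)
The strategy is to construct a Lyapunov (energy) function $E_n$ whose one-step ratio $E_{n+1}/E_n$ is bounded by some $q\in(0,1)$, from which linear convergence of $v_n=\|x_n-x^*\|^2$ follows. Set $B(x_n):=V(x_n)-P_{\psi(x_n)}(V(x_n)-\mu x_n)$, so that $B(x^*)=0$ by the remark in the Preliminaries, and the iteration \eqref{202403251018} rewrites as
\[
x_{n+1}-x^* = (x_n-x^*) + (1-\sigma)(x_n-x_{n-1}) - \tau B(x_n).
\]
Squaring both sides and invoking the polar identity $2\langle x_n-x^*,x^\nabla(n)\rangle = v_n-v_{n-1}+c_n$, together with the cocoercivity-type estimate $\langle B(x_n),x_n-x^*\rangle\geq\theta_1\|B(x_n)\|^2$ and the lower bound $\|B(x_n)\|^2\geq\theta^2 v_n$ from the preceding proposition, one obtains a scalar recursion expressing $v_{n+1}$ in terms of $v_n$, $v_{n-1}$, $c_n$ and $\|B(x_n)\|^2$. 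The remaining cross term $\langle x^\nabla(n),B(x_n)\rangle$ is tamed by Young's inequality with a free parameter $\epsilon>0$, and a parallel expansion of $c_{n+1}=\|(1-\sigma)x^\nabla(n)-\tau B(x_n)\|^2$ supplies the companion bound on $c_{n+1}$.

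Next I would take a Lyapunov function of the form
\[
E_n := v_n + \alpha\, v_{n-1} + \gamma\, c_n
\]
with nonnegative constants $\alpha,\gamma$ to be chosen, combine the bounds on $v_{n+1}$ and $c_{n+1}$, and use $\|B(x_n)\|^2\geq\theta^2 v_n$ to fold the beneficial $-2\tau\theta_1\|B(x_n)\|^2$ term into a contractive factor on $v_n$. Conditions (B1) $0<\sigma<1$ and (B2) $\tau<\theta_1\min\{(1-\sigma)/4,\,\sigma^2/(4-\sigma)\}$ should be exactly what is needed to make the resulting coefficient inequalities feasible: the factor $(1-\sigma)/4$ controls the inertial contribution to the $c_n$ and $v_{n-1}$ budgets, while $\sigma^2/(4-\sigma)$ balances the Young parameter absorbing the $x^\nabla(n)$--$B(x_n)$ coupling in the $c_{n+1}$ expansion. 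Once $E_{n+1}\leq qE_n$ is established, iterating gives $E_n\leq q^n E_0$, and since $v_n\leq E_n$, the linear bound $\|x_n-x^*\|\leq\sqrt{E_0}\,q^{n/2}$ follows.

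The main obstacle will be the algebraic bookkeeping: tuning $\alpha$, $\gamma$ and the Young parameter $\epsilon$ simultaneously so that the resulting quadratic-in-$\tau$ inequalities collapse precisely to the two constraints packaged in the $\min$ of (B2), without slack. A possibly cleaner route is to introduce the shifted quantity $\xi_n := v_n-(1-\sigma)v_{n-1}$, which partially decouples the inertial coupling at the level of the $v_n$-recursion and may reduce the combinatorial work; either way the argument rests on the two key ingredients of the preceding proposition (the cocoercivity and coercivity of $B$) together with the observation that for $\sigma\in(0,1)$ the inertial perturbation $(1-\sigma)(x_n-x_{n-1})$ remains strictly contractive.
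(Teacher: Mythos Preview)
Your route is genuinely different from the paper's. The paper does \emph{not} build a Lyapunov function and show a one-step contraction $E_{n+1}\le qE_n$. Instead it mirrors the continuous-time argument of Section~3: working in the difference calculus $v^\dla,v^\nabla,v^{\dla\nabla}$, it derives a master inequality
\[
v^{\dla\nabla}(n)+\sigma v^\nabla(n)+\theta\tau v_n+C_0 c_n+C_1 c^\dla(n)\le 0,
\]
then multiplies by $\varepsilon^{n+1}$ for a suitable $\varepsilon>1$ (the discrete analogue of the $e^t$ weight), and telescopes \emph{twice} to obtain $\varepsilon^{p+1}v_{p+1}\le M_1 p+\varepsilon v_1$. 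A key device you are missing is that the paper never uses Young's inequality on the cross term $\langle x^\nabla(n),B(x_n)\rangle$: it substitutes the exact identity $\tau B(x_n)=-(x^{\dla\nabla}(n)+\sigma x^\nabla(n))$ back into $\theta_1\tau\|B(x_n)\|^2$, which produces the clean decomposition $(1-\sigma)\|x^{\dla\nabla}(n)\|^2+\sigma^2 c_n+\sigma c^\dla(n)$ with no slack. The two halves of (B2) then appear at two separate places: $\tau<\theta_1(1-\sigma)/4$ makes the coefficient of $\|x^{\dla\nabla}(n)\|^2$ nonpositive after bounding $a_n\le 2(\|x^{\dla\nabla}(n)\|^2+c_n)$, while $\tau<\theta_1\sigma^2/(4-\sigma)$ is exactly $C_0>0$, needed so that the $c_n$ term survives the $\varepsilon$-weighting.

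Two concrete worries about your plan. First, handling $\langle x^\nabla(n),B(x_n)\rangle$ by Young's inequality introduces a free parameter and genuine slack; there is no a priori reason the resulting feasibility region for $(\alpha,\gamma,\epsilon)$ collapses \emph{exactly} to the $\min$ in (B2) --- you may end up proving the theorem only under a strictly smaller range of $\tau$. The exact-identity trick above is what makes the constants in (B2) come out sharp. Second, the paper's final bound is $v_p\le (M_1 p+\varepsilon v_1)\varepsilon^{-(p+1)}$, i.e.\ polynomial-times-geometric, not a pure geometric decay. This still counts as linear convergence, but it suggests that a strict one-step contraction $E_{n+1}\le qE_n$ may not hold under (B1)--(B2) alone; your argument would then stall at the final step even if the coefficient algebra could be made to work. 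If you want to salvage the Lyapunov approach, a safer target is a two-term recursion of the form $E_{n+1}\le q E_n + r_n$ with $\sum r_n$ geometrically bounded, rather than a pure contraction.
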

\begin{proof}
It follows from (B1)-(B2) that 
\begin{gather}\label{(A2)}
    \frac{\theta_1}{\tau}(1-\sigma)\geq 4
\end{gather}
and furthermore
\begin{gather*}
    \sigma\left(\frac{\theta_1\sigma}{\tau}+1\right)- 4>0.
\end{gather*}
Hence, we have
\begin{gather*}
\lim\limits_{\varepsilon\to 1^+}\left[\varepsilon\left(\sigma\left(\frac{\theta_1\sigma}{\tau}+1\right)- 4\right)-(\varepsilon-1)\left(\frac{\theta_1\sigma}{\tau}+1\right)\right]=\sigma\left(\frac{\theta_1\sigma}{\tau}+1\right)- 4>0,\\
\lim\limits_{\varepsilon\to 1^+}\left[\varepsilon\theta\tau-\sigma\varepsilon(\varepsilon-1)+(\varepsilon-1)^2\right]=\theta\tau>0,\\
\lim\limits_{\varepsilon\to 1^+}\left[1-\varepsilon^2(1-\sigma)\right]=1-(1-\sigma)=\sigma>0.
\end{gather*}
Thus, there exists $\varepsilon>1$ subject to the followings
\begin{gather}
\label{A3}\varepsilon\left(\sigma\left(\frac{\theta_1\sigma}{\tau}+1\right)- 4\right)-(\varepsilon-1)\left(\frac{\theta_1\sigma}{\tau}+1\right)>0,\\
\label{A4}\varepsilon\theta\tau-\sigma\varepsilon(\varepsilon-1)+(\varepsilon-1)^2>0,\\
\label{A5}1-\varepsilon^2(1-\sigma)>0.
\end{gather}
Denote
\begin{gather*}
    C_0:=\sigma\left(\frac{\theta_1}{\tau}\sigma+1\right)-4,\quad C_1:=\frac{\theta_1}{\tau}\sigma+1.
\end{gather*}
We have
\begin{gather*}
    v^{\dla\nabla}(n)+\sigma v^\nabla(n)=2\tau\inner{P_{\psi (x_n)}(V(x_n)-\mu x_n)-V(x_n)}{x_n-x^*}+2a_n-\sigma c_n-c^\dla(n),
\end{gather*}
which implies, by \eqref{CC} and \eqref{C}, that
\begin{gather*}
v^{\dla\nabla}(n)+\sigma v^\nabla(n)
\leq-\theta_1\tau\|P_{\psi (x_n)}(V(x_n)-\mu x_n)-V(x_n)\|^2-\theta\tau v_n+2a_n-\sigma c_n-c^\dla(n).
\end{gather*}
Then
\begin{gather*}
v^{\dla\nabla}(n)+\sigma v^\nabla(n)+\theta\tau v_n+\sigma c_n+c^\dla(n)\\
\leq-\theta_1\tau\|P_{\psi (x_n)}(V(x_n)-\mu x_n)-V(x_n)\|^2+2a_n\\
=-\frac{\theta_1}{\tau}\|x^{\dla\nabla}(n)+\sigma x^\nabla(n)\|^2+2a_n\\
=-\frac{\theta_1}{\tau}\left(\|x^{\dla\nabla}(n)\|^2+\sigma^2 c_n+2\sigma\inner{x^{\dla\nabla}(n)}{x^\nabla(n)}\right)+2a_n\\
=-\frac{\theta_1}{\tau}\left(\|x^{\dla\nabla}(n)\|^2(1-\sigma)+\sigma^2c_n+\sigma c^\dla(n)\right)+2a_n.
\end{gather*}
We infer
\begin{gather*}
v^{\dla\nabla}(n)+\sigma v^\nabla(n)+\theta\tau v_n+\sigma\left(\frac{\theta_1}{\tau}\sigma+1\right)c_n+\left(\frac{\theta_1}{\tau}\sigma+1\right)c^\dla(n)\\
\leq-\frac{\theta_1}{\tau}(1-\sigma)\|x^{\dla\nabla}(n)\|^2+2a_n,
\end{gather*}
which implies, as
\begin{gather*}
a_n=\|x^\dla(n)-x^\nabla(n)+x^\nabla(n)\|^2=\|x^{\dla\nabla}(n)+x^\nabla(n)\|^2\\
\leq 2(\|x^{\dla\nabla}(n)\|^2+\|x^\nabla(n)\|^2),
\end{gather*}
that
\begin{gather*}
v^{\dla\nabla}(n)+\sigma v^\nabla(n)+\theta\tau v_n+C_0c_n+C_1c^\dla(n)\\
\leq\left(4-\frac{\theta_1}{\tau}(1-\sigma)\right)\|x^{\dla\nabla}(n)\|^2\\
\leq 0\quad\text{(by \eqref{(A2)})}.
\end{gather*}
Multiplying both sides by $\varepsilon^{n+1}$ and then using Proposition \ref{rem20220120}, we get
\begin{gather*}
0\geq\varepsilon^{n+1}\left(v^{\dla\nabla}(n)+\sigma v^\nabla(n)+\theta\tau v_n+C_0c_n+C_1c^\dla(n)\right)\\
=(\varepsilon^nv^\nabla)^\dla(n)+(\varepsilon\sigma-\varepsilon+1)(\varepsilon^{n+1}v)^\nabla(n)+\varepsilon^nv_n[\varepsilon\theta\tau-\sigma\varepsilon(\varepsilon-1)+(\varepsilon-1)^2]\\
+C_1(\varepsilon^nc)^\dla(n)+\varepsilon^nc(n)[\varepsilon C_0-(\varepsilon-1)C_1].
\end{gather*}
Using \eqref{A3}-\eqref{A4}, the inequality above gives
\begin{gather*}
0\geq(\varepsilon^nv^\nabla)^\dla(n)+(\varepsilon\sigma-\varepsilon+1)(\varepsilon^{n+1}v)^\nabla(n)+C_1(\varepsilon^nc)^\dla(n).
\end{gather*}
Summing the line above from $n=1$ to $n=m$, we obtain
\begin{gather*}
M_1\geq\varepsilon^{m+1}v^\nabla(m+1)+(\varepsilon\sigma-\varepsilon+1)\varepsilon^{m+1}v_m+C_1\varepsilon^{m+1}c_{m+1},
\end{gather*}
where $M_1$ is some positive constant. Since $C_1>0$, we infer
\begin{gather*}
M_1\geq\varepsilon^{m+1}v^\nabla(m+1)+(\varepsilon\sigma-\varepsilon+1)\varepsilon^{m+1}v_m\\
=\varepsilon^{m+1}v^\dla(m)+(\varepsilon\sigma-\varepsilon+1)\varepsilon^{m+1}v_m\\
=(\varepsilon^mv_m)^\dla+\varepsilon^mv_m[\varepsilon^2(\sigma-1)+1]\\
\geq(\varepsilon^mv_m)^\dla,
\end{gather*}
where the last inequality uses \eqref{A5}. Summing this inequality from $m=1$ to $m=p$, we see
\begin{gather*}
M_1p+\varepsilon v_1\geq\varepsilon^{p+1}v_{p+1},    
\end{gather*}
which means that the sequence $\{x_n\}$ generated by \eqref{202403251018} converges linearly to the unique solution of the IQVIP \eqref{IQVI}.    
\end{proof}

\section{Numerical Experiment}
In this section, we first consider some academic examples in $\mathbb{R}^2$ and then discuss the applications to traffic assignment problems. We also compare the performance of algorithm \eqref{202403251017} with the algorithm derived from the first order dynamical system (see \cite{Thanh2024}).\\
\begin{Example}
 In $\mathbb{R}^2$, let $V: \mathbb{R}^2\rightarrow \mathbb{R}^2$ defined by $V(x)=Q x$ with $Q= \begin{pmatrix}
     3.4& -0.64\\
     2.375 & 0.8
 \end{pmatrix}$. Let $\psi: \mathbb{R}^2 \rightrightarrows \mathbb{R}^2$ be mapping with $(\Gamma_1)$-condition defined by: for $(x_1,x_2)\in \mathbb{R}^2$, $\psi(x_1,x_2)$ is the rectangular constructed by four lines: $x=x_1, x=0, y=y_2, y=0$. We consider the IQVIP \eqref{IQVI} and use the algorithm \eqref{202403251017} for solving it..
\end{Example}
Note that the matrix $Q$ is positive definite. 
Thus, the mapping $V$ is Lipschitz continuous with $L=2.2$, which is
the maximum eigenvalue of $Q$, and strongly monotone with $\eta =2$, which is the 
minimum eigenvalue of $Q$. With $\mu =2$, we need to verify the existence and uniqueness of the IQVIP \eqref{IQVI}. For $y,r,s \in \mathbb{R}^2$, we have
\begin{equation*}
    \|P_{\psi(y)}(r)-P_{\psi (y)}(s)\| \leq \|r-s\|,
\end{equation*}
which means the set-valued mapping $\psi$ satisfies the condition \eqref{Kappa1}. Next, we check the existence of the solution to the IQVIP \eqref{IQVI} by calculating
\[\theta:= \eta-\rho-\frac{1}{2}-\frac{1}{2}L^2-\frac{1}{2}\mu^2+\mu\eta = 0.08 >0.\]
It can be seen that the pair $(0,0)$ is a solution of IQVIP \eqref{IQVI}. Besides, we also have 
\[ \mu-\sqrt{L^2-2\eta\mu+\mu^2} -\rho \approx 0.083 >0,\]
 hence, the IQVIP \eqref{IQVI} has a unique solution, which is $(0,0)$. We calculate
\[\theta_1=\frac{\theta}{(2L+\rho+\mu)^2}\approx 0.00146.\]
Now we are ready to apply the algorithm \eqref{202403251017}.  The performance of the algorithm \eqref{202403251017} is illustrated in Figure \ref{fig1}. Noting that, from condition $(B2)$, we expect to choose the value for $\tau$ as big as possible. Therefore, we choose $\sigma =0.59$ and the corresponding $\tau=0.000146<0.1\theta_1$. With the beginning points $(7,5)$ and $ (7,-5)$ and let the $x_{-1}= x_0$, the sequences generated by the algorithm \eqref{202403251017} and its norm (see Figure \ref{fig2}) are marked by red and black line. If we choose $\sigma =0.9$, which makes the inertial effect parameter $1-\sigma$ smaller and start with point $(-7,5)$, the result is marked by green line. By contraction, we choose $\sigma =0.1$ to make the inertial parameter greater and show the sequence and its norm as blue line.\\
In Figures \ref{fig1} and \ref{fig2}, if we reduce the inertial parameter, the algorithm \eqref{202403251017} runs slower, even can not reach the 0.1 error within 20000 step. But if we take the inertial parameter $(1-\sigma)$ too big as in blue line, then the algorithm \eqref{202403251017} can not keep linear convergence because condition $(B2)$ fails to hold.\\
\begin{figure}
\centering
\includegraphics[scale=0.73]{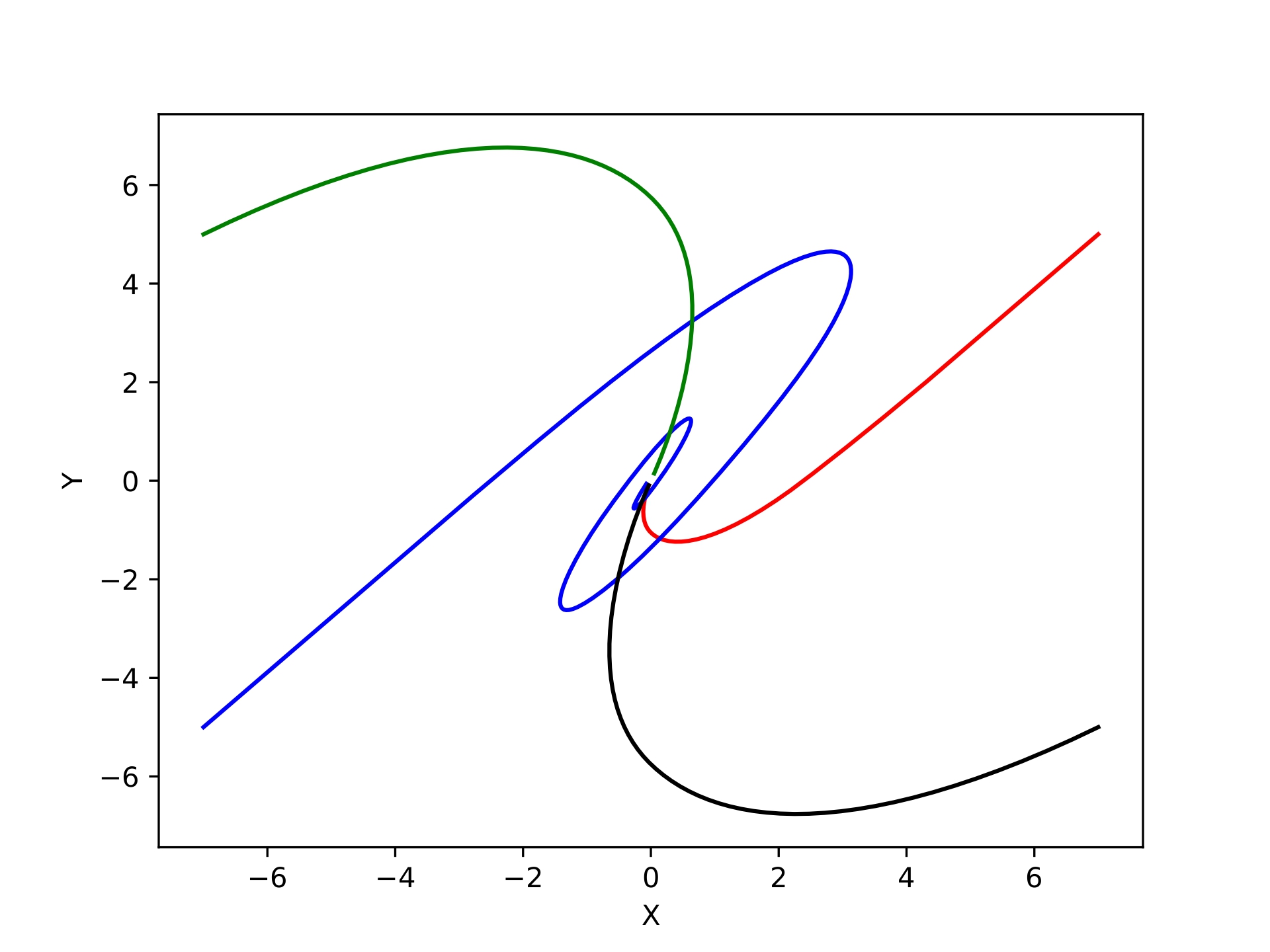}\\ 
\caption{Convergence of sequences generated by algorithm \eqref{202403251017} with different starting points and different inertial parameter $1-\sigma$.} 
\label{fig1}
\end{figure}

\begin{figure}[ht]
\centering
\includegraphics[scale=0.73]{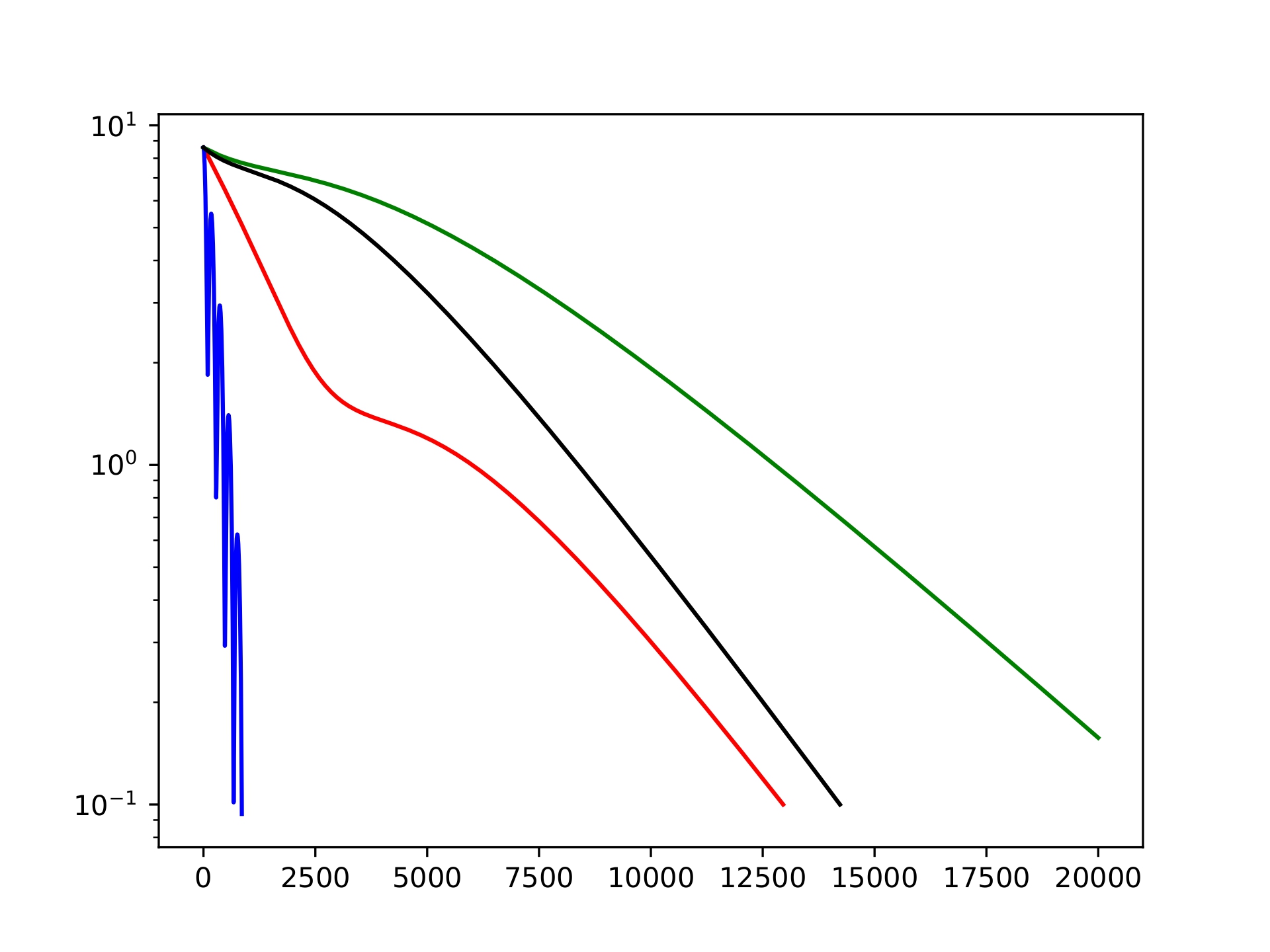}\\ 
\caption{Comparing norm of $x_n$ generated by corresponding parameters of the algorithm \eqref{202403251017}.} 
\label{fig2}
\end{figure}
Besides, in comparison with algorithm \eqref{202403251017} derived from the first order dynamical system, we choose the same parameter $\tau = 0.000146$.
With the error $\epsilon =0.1$, the results of the algorithm \eqref{1stAlg} are demonstrated in Figure \ref{fig3}. As we can see, the algorithm \eqref{202403251017} reaches the error after 12957 steps while the algorithm \eqref{1stAlg} needs 20745 steps, which is significantly higher. The better performance of the algorithm \eqref{202403251017} comes from the inertial effects of the iterations.

\begin{figure}
\centering
\includegraphics[scale=0.7]{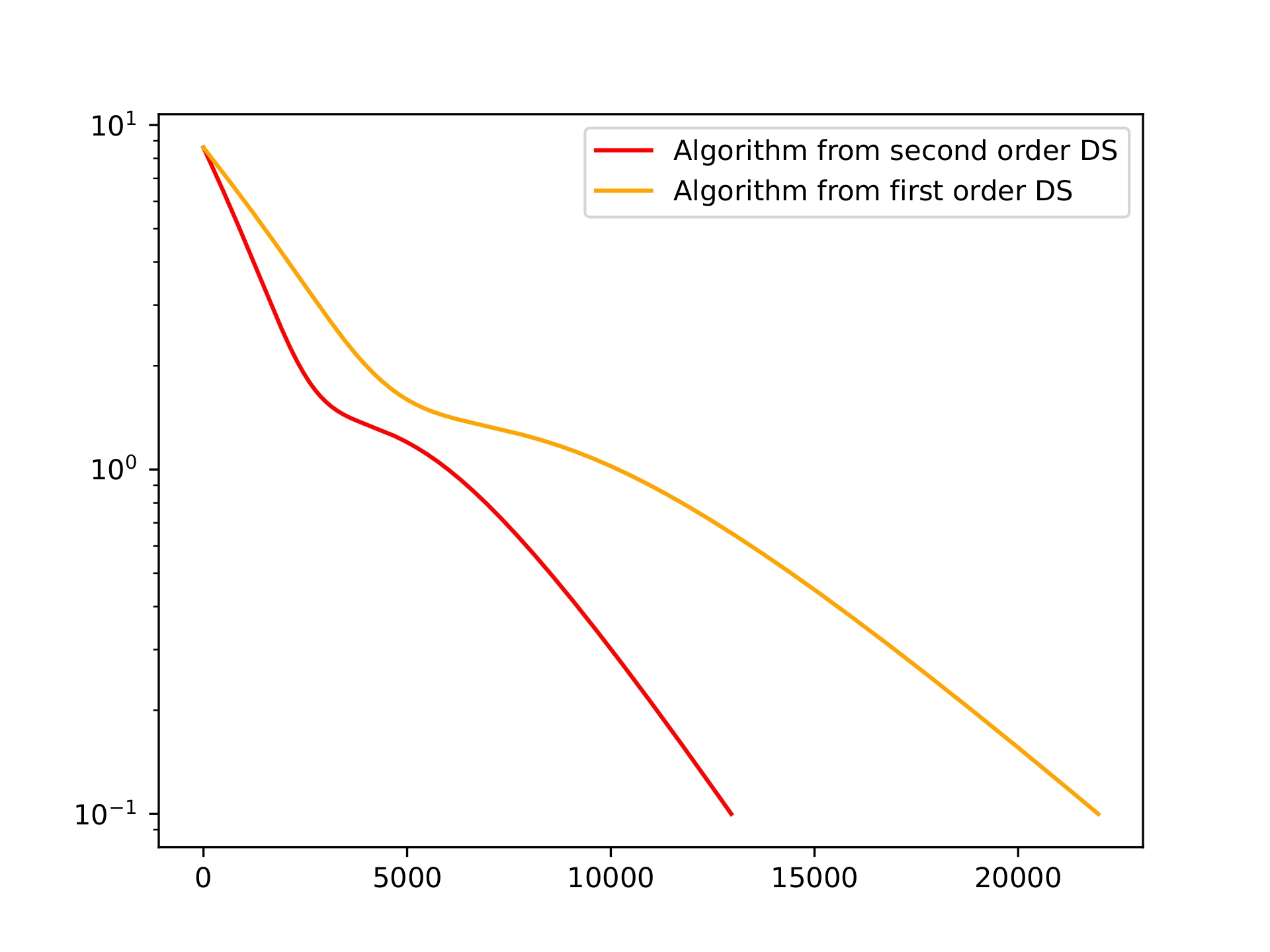}\\ 
\caption{Performance of sequences generated by scheme \eqref{202403251017} and \eqref{1stAlg}.} 
\label{fig3}
\end{figure}

\subsection{Application in Traffic Assignment}
In this part, we recall the traffic assignment problem in \cite{Thanh2024}, which is formed as an IQVIP. We will use algorithm \eqref{202403251017} to solve and compare this algorithm with algorithm \eqref{1stAlg} in term of the residual.\\
\begin{figure}
\centering
\includegraphics[scale=0.8]{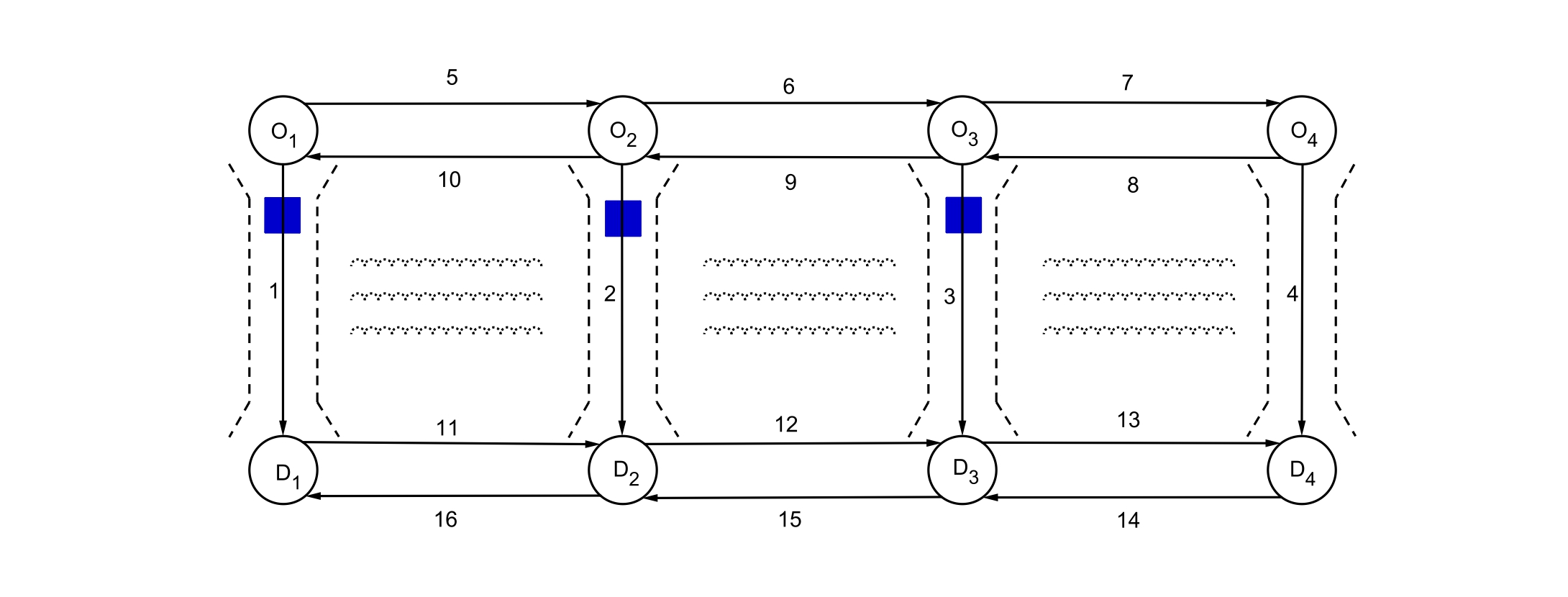}\\ 
\caption{Road pricing problem with four bridge network- Source from \cite{Thanh2024}} 
\label{ODpic}
\end{figure}
We consider the network in which the government attempts to control the link flow $V_i$ by imposing extra tolls $x_i$ on the link $i$ for some special reason. Hence, we consider the link flow $V$ as a function of imposed toll $x$. Specifically, we assume that there are 8 nodes and 16 links connecting these nodes in the traffic network which is presented in Figure \ref{ODpic}. The links 1, 2, 3, 4 are four bridges connecting the origin $O_i$ to the destination $D_i$ ($1\leq i \leq 4 $). Assume further that we already have the citizen's demand for travelling on each link. The policy makers want to control the flow on link 1, 2, 3 to fit with their capacity and flexible conditions by imposing tolls $x_1, x_2, x_3$. For convenient, we employ the detail condition in \cite{Thanh2024} to compare two algorithms. Let $\psi(x)=\psi(x_1,x_2,x_3)=\{(a_1,a_2,a_3)\}$ be the set-valued mapping of conditions satisfying
\[40+x_1\leq a_1\leq 90+x_1; x_2\leq a_2\leq x_2+50; 100+x_3 \leq a_3 \leq 200+x_3. \]
To meet the government's goal, we need the link flow $V(x) \in \psi(x)$. Therefore, from the theoretical analysis in \cite{Thanh2024}, we form an IQVIP derived from this problem as follows.\\
Find the toll $x^*$ satisfied 
\[V(x^*) \in \psi(x^*) \quad \text{and } (z-V(x^*))^T x^* \leq 0,\quad \forall z\in \psi(x^*),\]
which can be rewritten equivalently
\begin{equation}\label{IQVI_TA}
    W(x^*)\in -\psi(x^*) \quad \text{and } (z-W(x^*))^Tx^* \geq 0, \quad \forall z\in -\psi(x^*),
\end{equation}
where $W=-V$.\\
\begin{table}[ht]
\centering
\includegraphics[scale=0.5]{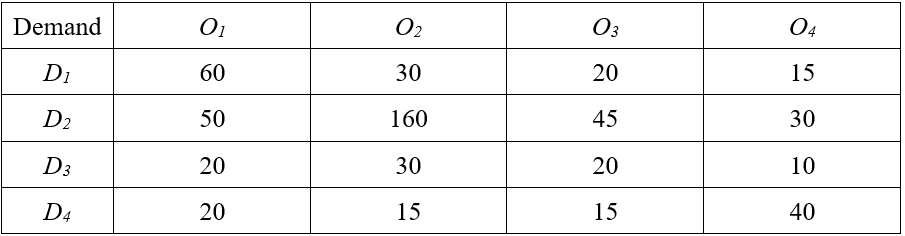}\\ 
\caption{Origin-destination demand table} 
\label{ODdemand}
\end{table}

\begin{table}[ht]
\centering
\includegraphics[scale=0.5]{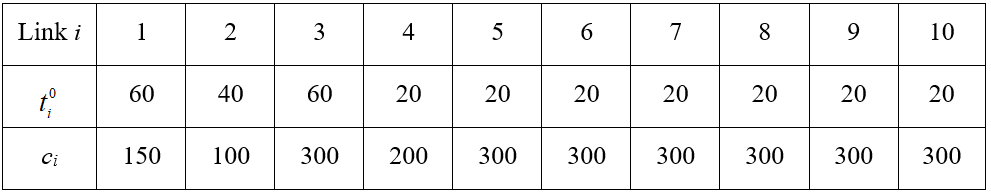}\\ 
\caption{Link free flow travel time and capacity} 
\label{tf}
\end{table}
To calculate the value of link flow at every imposed toll point, we use the user's equilibrium condition for the experiment purpose. As the matter of fact, this data can be collected through observing the number of vehicles on each link in real life. The data about demand, capacity, link travel time between 16 nodes are given in Tables \ref{ODdemand} and \ref{tf}. We use the Bureau of Public Roads (BPR) function to establish the relationship between link travel time and the link flow: 
\[t_i(V_i)= t_i^0 \left[1+0.15 \left(\frac{V_i}{c_i}\right)^4\right],\]
where $V_i,t_i^0$ and $c_i$ denote link flow, free flow travel time and capacity on link $i$, respectively. Now we are ready to use the algorithm \eqref{202403251017} to solve \eqref{IQVI_TA}. Our procedure is presented as follows. We start with the toll $x_0=(0,0,0)$ with the assumption that the toll has not been changed. From the user's equilibrium condition, we figure out the value of link flow at $x_0$, then calculate the next imposed toll by the algorithm \eqref{202403251017} and continue the next iteration. Noting that the solution $x^*$ of \eqref{IQVI_TA} satisfies
 \[V(x) = P_{\psi (x)}(V(x)+\mu x),\]
 so we use the residual term $r_n= \| P_{\psi(x_n)}(V(x_n)+\mu x_n) -V(x_n)\|$ to illustrate the convergence rate of this algorithm. Of course, if the algorithm converges, then the residual converges to zero.\\
 In the first experiment, we take $\sigma = 0.6, \tau = 0.02, \mu=0.5$ . In the second, we take $\sigma=0.6,\tau = 1/30,\mu =0.8$. The performance of algorithm within 150 time step is demonstrated by the link flow and the residual is showed in Figures \ref{fig4} and \ref{fig5}. From two figures, we can see the algorithm \eqref{202403251017} works well where the residual converges linearly to zero. In comparison with the algorithm \eqref{1stAlg}, the algorithm \eqref{202403251017} converges faster in both experiments with the same parameter $\tau$ and $\mu$, and the results are presented in Figure \ref{fig6}.
 
 \begin{figure}
   \raggedleft
  \begin{subfigure}
 \centering
 \includegraphics[width=0.4\linewidth]{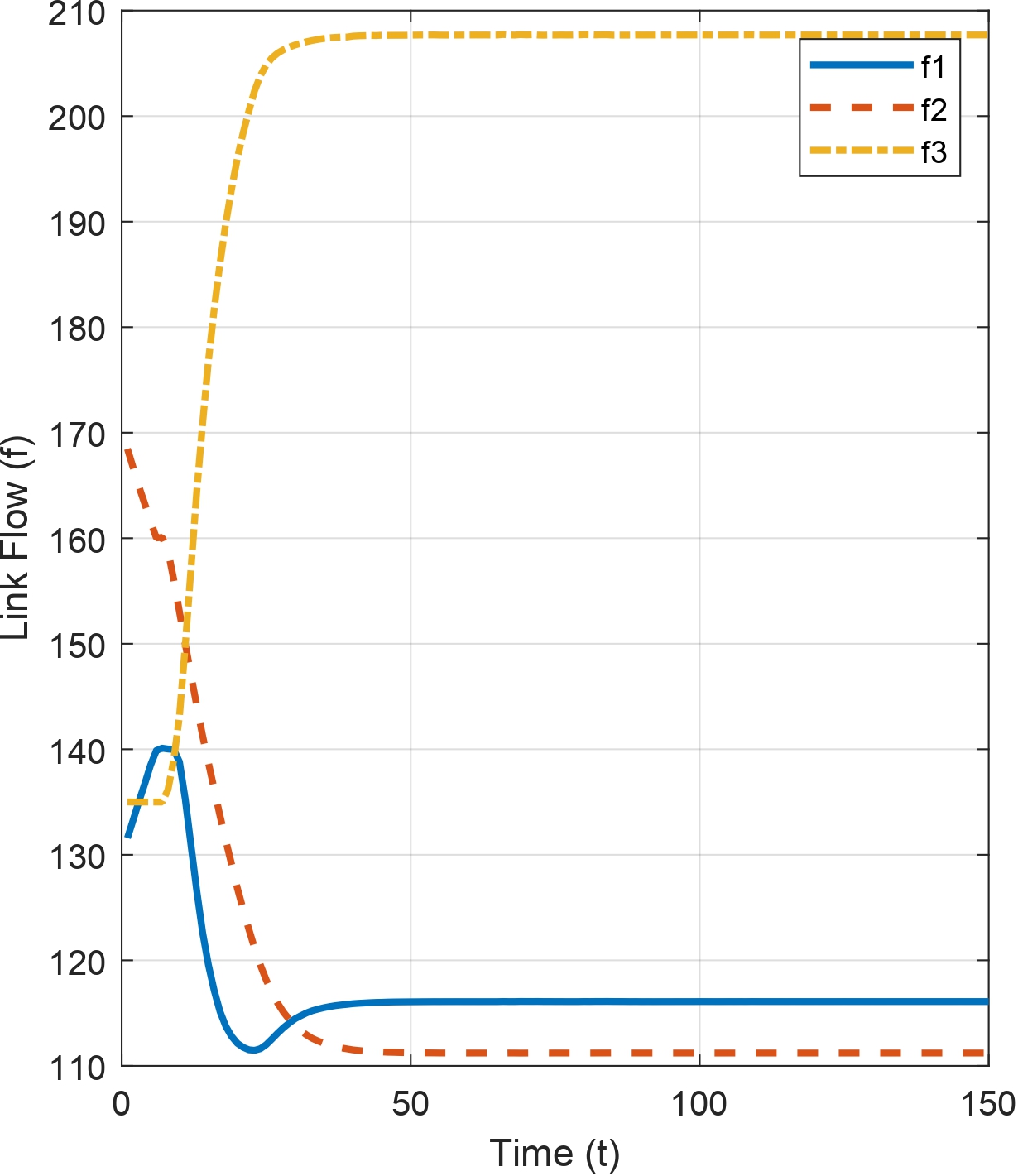}   
  \end{subfigure}%
\hfill
  \begin{subfigure}
  \centering
    \includegraphics[width=0.4\linewidth]{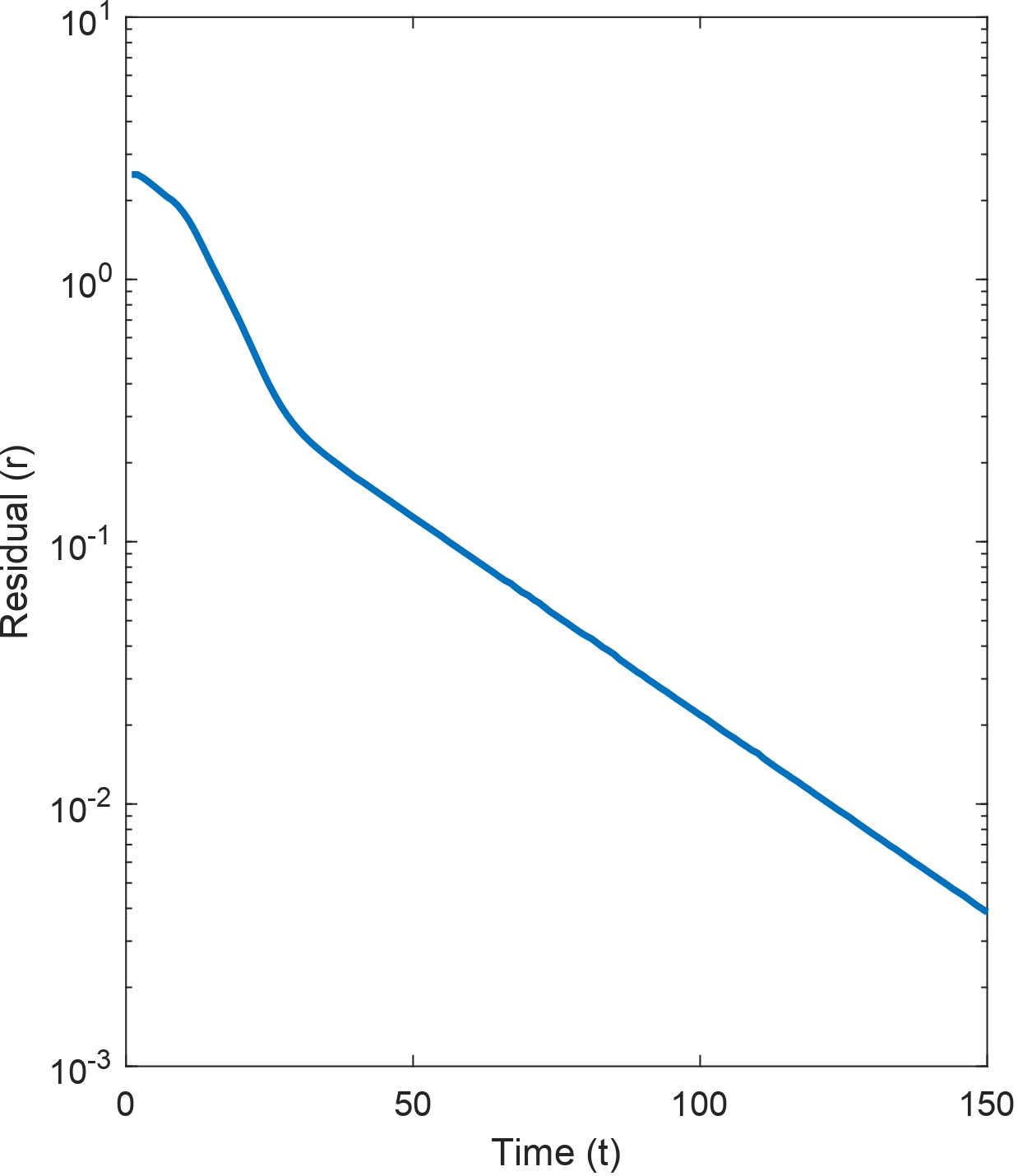}
  \end{subfigure}
  \caption{Link flows and the residual of the projection  algorithm \eqref{202403251017} with $\sigma=0.6, \tau = 0.02$ and $\mu =0.5$.}
  \label{fig4}
\end{figure}

 \begin{figure}
   \raggedleft
  \begin{subfigure}
 \centering
 \includegraphics[width=0.4\linewidth]{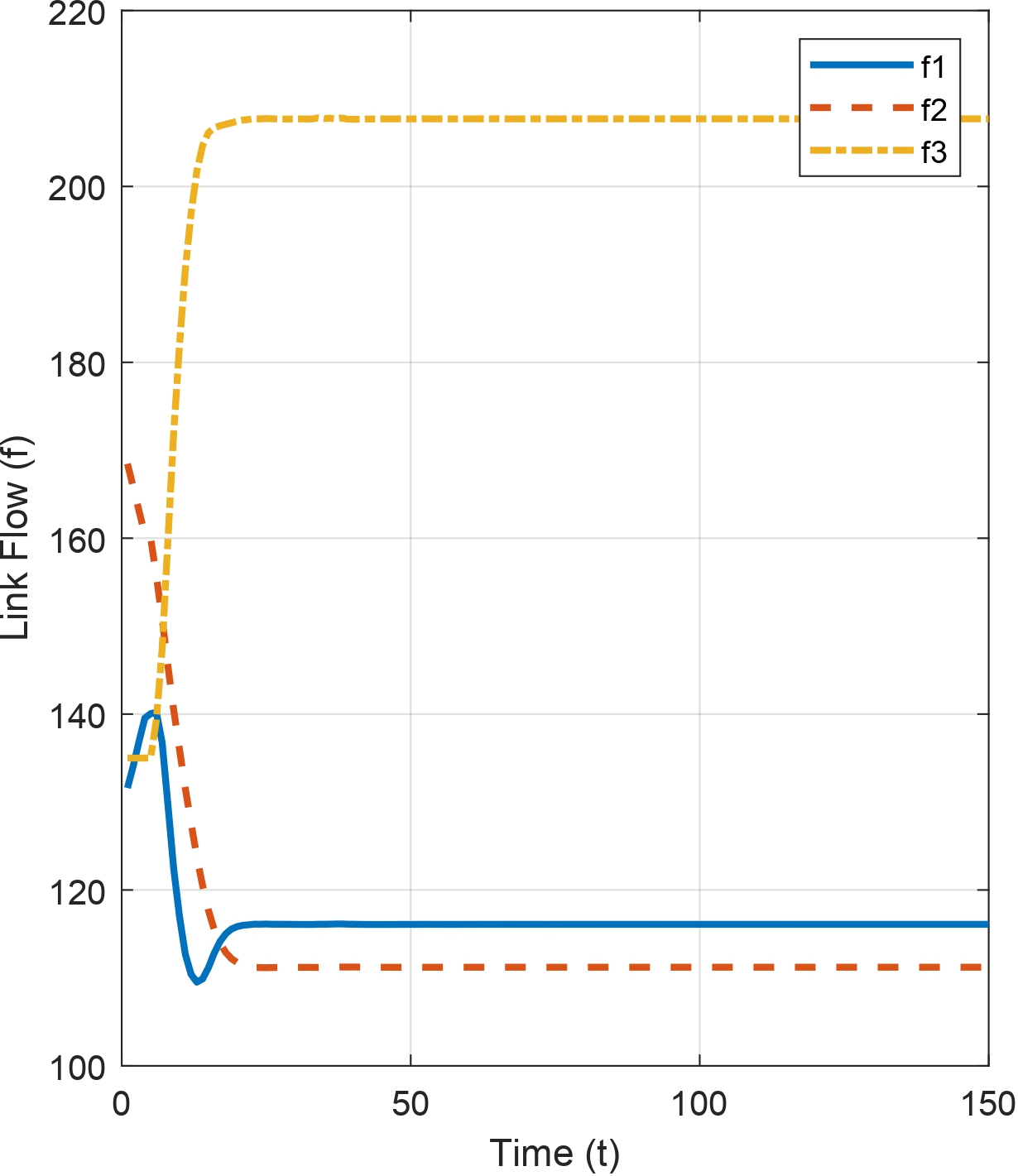}   
  \end{subfigure}%
\hfill
  \begin{subfigure}
  \centering
    \includegraphics[width=0.4\linewidth]{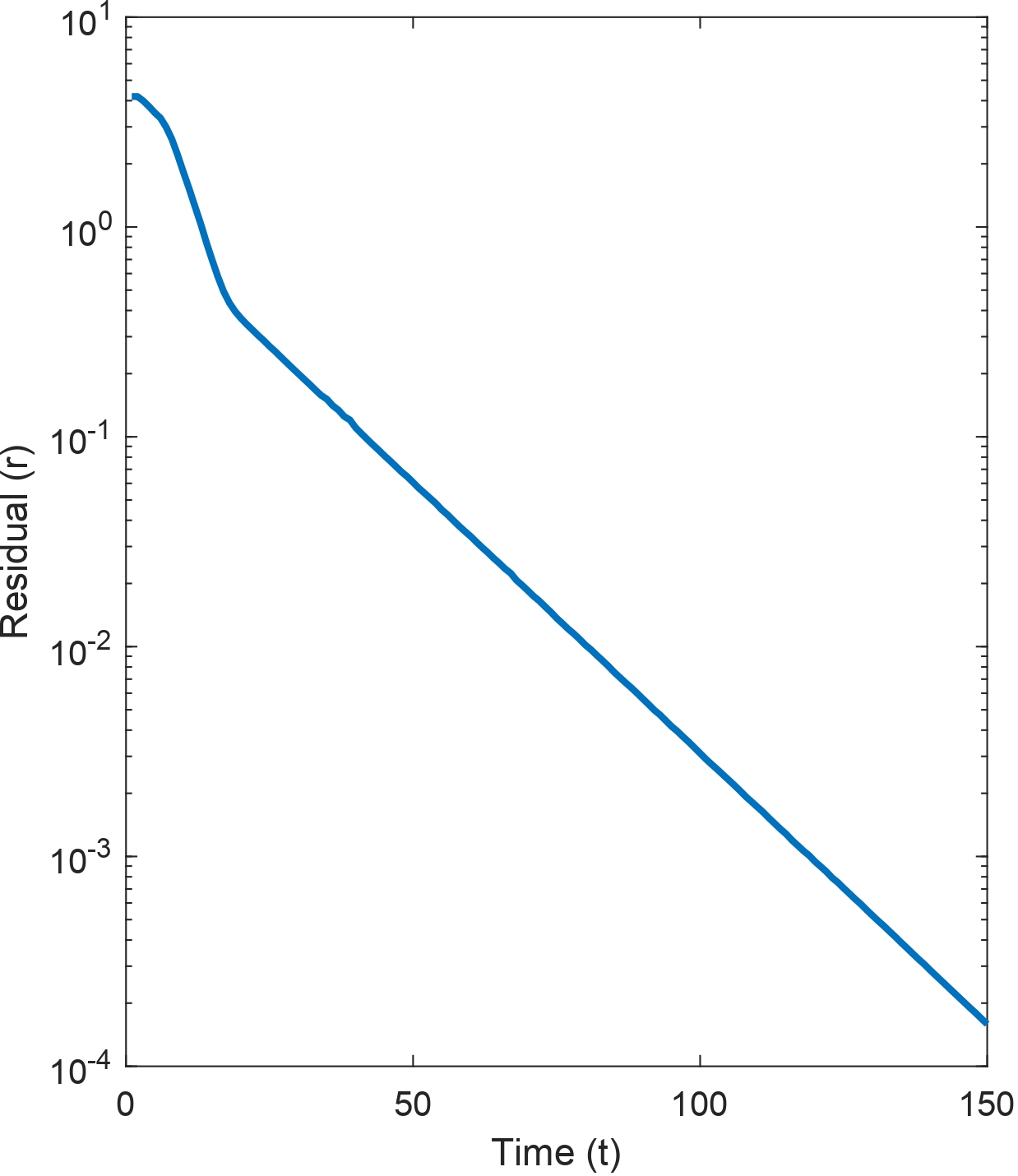}
  \end{subfigure}
  \caption{Link flows and the residual of the projection  algorithm \eqref{202403251017} with $\sigma=0.6, \tau = 1/30$ and $\mu =0.8$.}
  \label{fig5}
\end{figure}

\begin{figure}
   \raggedleft
  \begin{subfigure}
 \centering
 \includegraphics[width=0.4\linewidth]{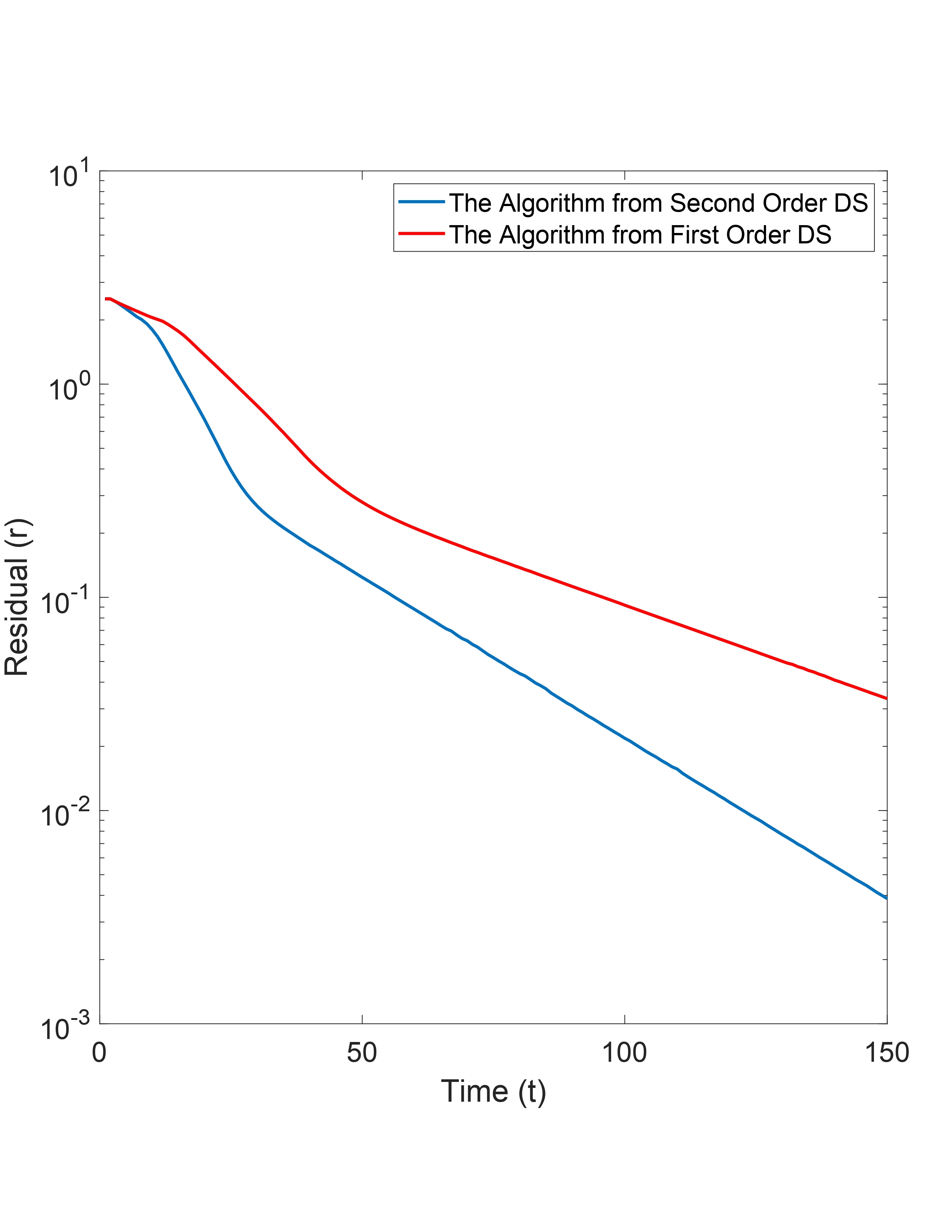} 
  \end{subfigure}%
\hfill
  \begin{subfigure}
  \centering
    \includegraphics[width=0.4\linewidth]{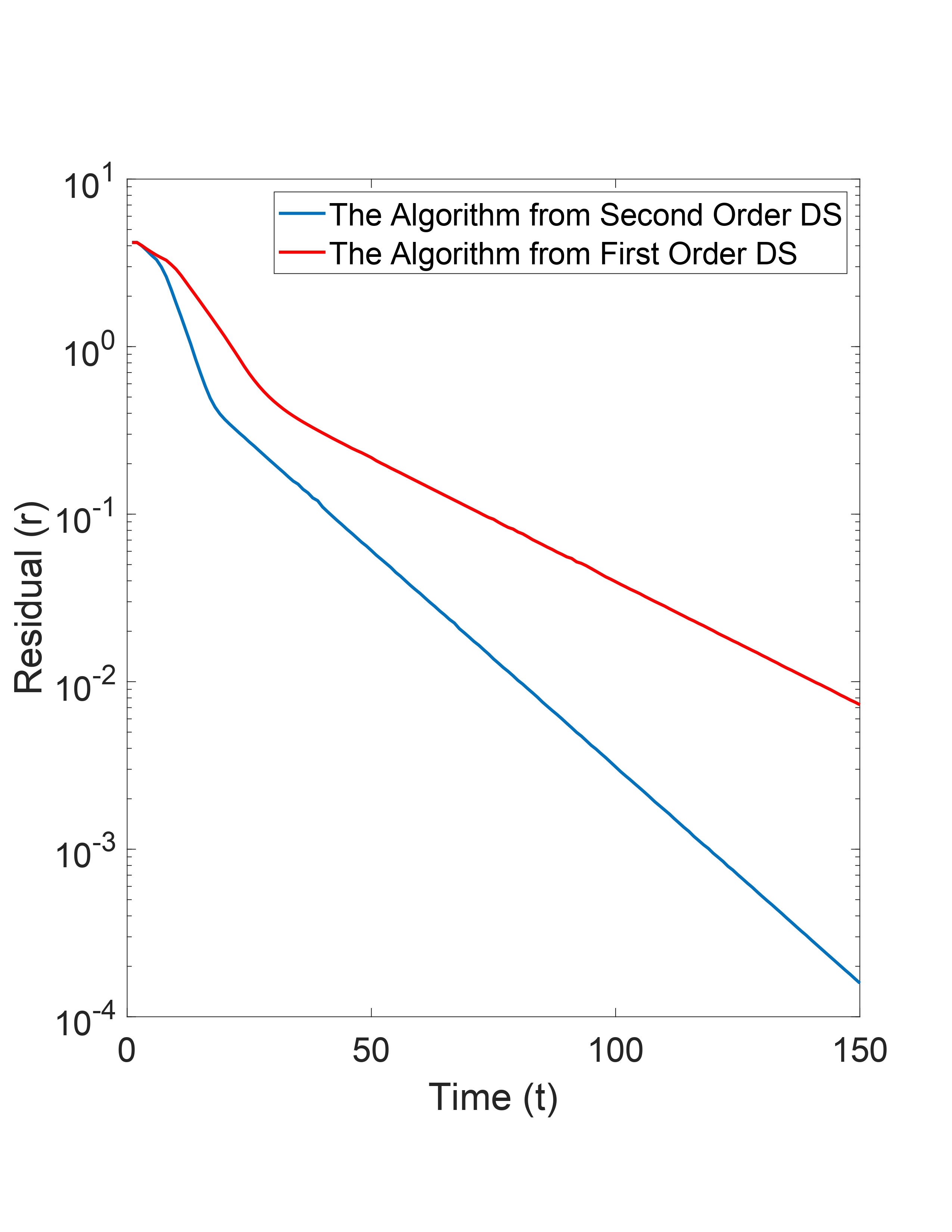}
  \end{subfigure}
  \caption{Residual of the algorithm \eqref{202403251017} and \eqref{1stAlg} with same parameter $\tau$ and $\mu$.}
  \label{fig6}
\end{figure}

\newpage 

\section*{Declarations}
{\bf Conflict of interest} The authors declare no competing interests.

\end{document}